\numberwithin{equation}{section}
\newtheorem{theorem}{Theorem}[section]
\newtheorem{tm}{Theorem}[section]
\newtheorem{rk}{Remark}[section]
\newtheorem{lm}{Lemma}[section]
\newtheorem{ass}[theorem]{Assumption}
\newcommand{\E}{\mathbb E}
\newcommand{\PP}{\mathbb P}
\newcommand{\N}{\mathbb N}
\newcommand{\R}{\mathbb R}
\newcommand{\C}{\mathcal C}
\newcommand{\HH}{\mathcal H}
\newcommand{\F}{\mathcal F}
\newcommand{\FFF}{\mathscr F}
\newcommand{\<}{\langle}
\renewcommand{\>}{\rangle}
\begin{document}
\title[Optimal H\"older Continuity and Hitting Probabilities]
{Optimal H\"older Continuity and Hitting Probabilities for SPDEs with Rough Fractional Noises} 
\author[]{Jialin Hong}
\thanks{}
\address
{1. LSEC, ICMSEC, 
Academy of Mathematics and Systems Science, Chinese Academy of Sciences, 
Beijing, 100190, China \\ 
2. School of Mathematical Science, University of Chinese Academy of Sciences, Beijing, 100049, China}
\email{hjl@lsec.cc.ac.cn}

\author[]{Zhihui Liu}
\thanks{} \address
{Department of Mathematics, Southern University of Science and Technology, Shenzhen, 518055, China}
\email{liuzh3@sustech.edu.cn}

\author[]{Derui Sheng}
\thanks{} 
\address
{1. LSEC, ICMSEC, 
Academy of Mathematics and Systems Science, Chinese Academy of Sciences, 
Beijing, 100190, China\\ 
2. School of Mathematical Science, University of Chinese Academy of Sciences, Beijing, 100049, China}
\email{sdr@lsec.cc.ac.cn}

\date{\today}

\subjclass[2010]{60H15, 60G22.}

\keywords{
stochastic partial differential equation, 
fractional Brownian sheet, 
Hurst index $H<1/2$,
H\"older exponent,
hitting probability}

\begin{abstract}
We investigate the optimal H\"older continuity and hitting probabilities for systems of stochastic heat equations and stochastic wave equations driven by an additive fractional Brownian sheet with temporal index $1/2$ and spatial index $H\le1/2$. Using stochastic calculus for fractional Brownian motion, we prove that these systems are well-posed and the solutions are H\"older continuous. Furthermore, the optimal H\"older exponents are obtained, which is the first result, as far as we knew, on the optimal H\"older continuity of SHEs and SWEs driven by fractional Brownian sheet that is rough in space. Based on this sharp regularity, we obtain lower and upper bounds of hitting probabilities of the solutions in terms of Bessel--Riesz capacity and Hausdorff measure, respectively.
\end{abstract}

\maketitle

\section{Introduction and Main Results}
\label{sec-1}

Consider
a $d$-dimensional system of stochastic partial differential equations (SPDEs)
\begin{align}\label{spde}
L u_i(t,x)=b_i(u(t,x))+\sum_{j=1}^d \sigma_{i,j}
\frac{\partial^2 W_j}{\partial t\partial x}\quad 
\text{in}\quad (0,T]\times \R,
\end{align}
for $i\in\N_d:=\{1,\ldots,d\}$,
where $T>0, u(t,x)=(u_1(t,x),\ldots,u_d(t,x))$, $\sigma=(\sigma_{i,j})_{i,j\in \N_d}$ is an $\R^{d\times d}$-valued constant matrix, $L=\partial_t-\partial_{xx}$ corresponds to the stochastic heat equation (SHE), or $L=\partial_{tt}-\partial_{xx}$ corresponds to
 the stochastic wave equation (SWE), $b=(b_i)_{i\in \N_d}$ is an $\R^d$-valued function.  In the SHE case we impose $u(0,x)=u_0(x)$, $x \in \R$, while in the SWE case we further impose $u_t(0,x)=v_0(x)$, $x \in \R$. 
 The noise process $W=(W_1,\ldots,W_d)$ is a  centered Gaussian process whose
covariance functional is given by
\begin{align}\label{rkhs-w}
\E[W_i(\varphi)W_j(\psi)]
=\delta_{ij}\int_0^T \int_{\R} \F \varphi(t,\cdot)(\xi) \overline{\F \psi(t,\cdot)(\xi)} \mu(d\xi) dt
\end{align}
for any $\varphi,\psi\in \C_0^\infty([0,T]\times \R)$.
The objective of this paper is to study the optimal H\"older continuity and hitting probability of the solution $u=\{u(t,x),\,t\in[0,T],\,x\in\R\}$ of Eq. \eqref{spde} with the spectral measure $\mu$  given by 
\begin{align}\label{mu}
\mu(d\xi)=c_H|\xi|^{1-2H}d\xi,\quad c_H=\frac{\Gamma(2H+1)\sin(\pi H)}{2\pi},
\end{align}  
where $\Gamma$ is the Gamma function and $H\le 1/2$.

The sample-path H\"older continuity 
 for the solutions of 
SHE and SWE driven by temporally white and spatially homogeneous colored noise has been well studied when $\tilde\Gamma=\F\mu$  is a non-negative tempered measure. 
 For instance,
the authors in \cite{SS02} study the H\"older continuity properties of SHE over $\R^m$, $m\ge 1$ assuming that in \eqref{rkhs-w}, 
$\tilde\Gamma=\F\mu$  is a non-negative tempered measure and 
the spectral measure $\mu$ of the noise satisfies
\begin{align}\label{spe-mea}
\int_{\R^m} \frac{\mu(dx)}{(1+|x|^2)^\eta}&<\infty\quad \text{for some}\quad \eta\in (0,1).
\end{align}
Based on the fractional Sobolev embedding theorem combining the Fourier transformation technique, \cite{DF98, DS05, DS09, HHN14} give the H\"older exponent of SWE over $\R^m$ with $m=1,2,3$ driven by similar noise whose spectral measure is given by a Riesz kernel.
See also \cite{Wal86} for SHE and SWE over bounded intervals driven by space-time white noise with homogeneous Dirichlet or Neumann boundary condition.
We also remark that \cite{HNS11} and, respectively, \cite{HLN12}, applying Feymann-Kac formula, derive the H\"older regularity of SHE driven by  fractional Brownian sheet (FBS) with each component index $H\in (1/2,1)$ and by temporal fractional Brownian motion with $H<1/2$ and spatial homogeneous smooth noise. 
{\color{black}It is known that for the spectral measure $\mu$ defined by \eqref{mu} with $H<1/2$, its Fourier transform
 $\tilde\Gamma=\mathcal F\mu$  given by 
$$\tilde\Gamma(\varphi)=H(2H-1)\int_{\R}(\varphi(x)-\varphi(0))|x|^{2H-2}d x,\quad\varphi\in\C_0^\infty(\R),$$
 is a genuine distribution and is not locally integrable (see e.g. \cite{BJQ15}). 
 }

Due to the complex spatial structure of the FBS 
$W$ determined by \eqref{mu}, the well-posedness of Eq. \eqref{spde} with general Lipschitz continuous diffusion coefficient $\sigma(u)$ is an open problem.
Recently, \cite{BJQ15, BJQ16} establish the existence of a unique mild solution and its H\"older-type estimate of Eq. \eqref{spde} with vanishing drift and affine diffusion, i.e., $b= 0$ and $\sigma(u)=a_1u+a_2$ with $a_1,a_2\in \R$.
For special nonlinear diffusion $\sigma(u)$ which is differentiable with a Lipschitz derivative and satisfies $\sigma(0) = 0$, \cite{HHLNT17(AOP)} obtains a similar well-posed result.
In these multiplicative cases, the Hurst index is restricted as $H\in (1/4,1/2)$ because of technical requirements. 
On the other hand, we recall that \cite{CHL17(SINUM)} investigates the Sobolev regularity of the solution to Eq. \eqref{spde} and Wong--Zakai approximations for the proposed noise to numerically solve Eq. \eqref{spde} with various boundary conditions. 
In this paper,
we need the following assumptions on the drift coefficient $b$ and the initial data $u_0$ and $v_0$.
\begin{ass}\label{ass-b}
$b$ is Lipschitz continuous, i.e., 
\begin{align*}
L_b:=\sup_{u_1\neq u_2} \frac{|b(u_1)-b(u_2)|}{|u_1-u_2|}<\infty.
\end{align*}
\end{ass}

\begin{ass}\label{ass-u0v0}
$u_0$ and $v_0$ are stochastically $\alpha$-H\"{o}lder continuous with $\alpha\in (0,1]$, i.e., for all $p\ge 1$, there exists $L_0=L_0(p)\in (0,\infty)$ such that 
\begin{align*}
\|u_0(x)-u_0(y)\|_{\mathbb L^p}+\|v_0(x)-v_0(y)\|_{\mathbb L^p}
\le L_0 |x-y|^\alpha,\quad x,y\in \R.
\end{align*}
\end{ass}
Here and after, for $v\in L^p(\Omega;\R^d)$, we denote $\|v\|_{\mathbb L^p}:=(\E[\|v\|^p])^\frac 1p,$ for $p\ge 1$. In order to characterize the regularity of the solution of Eq. \eqref{spde}, we introduce 
the following parabolic and hyperbolic metrics: 
\begin{align*}
\triangle((t,x);(s,y)):
=\begin{cases}
|t-s|^{\frac 12}+|x-y|,&\quad \text{for SHE};\\
|t-s|+|x-y|,&\quad \text{for SWE},
\end{cases}
\end{align*}
for $(t,x),(s,y)\in [0,T]\times \R$.
{\color{black}Our first result on the well-posedness and H\"older continuity property of Eq. \eqref{spde} is the following theorem, which implies that almost surely trajectories of $u$ are H\"older continuous. 
 }
\begin{tm}\label{wel-hol}
\begin{itemize}
 \item[(1)] Let Assumption \ref{ass-b} hold. {\color{black}}Assume that $u_0$ and $v_0$ are continuous and possess uniformly bounded $p$-th moments for $p\ge 2$. 
 Eq. \eqref{spde} has a unique mild solution $u=\{u(t,x):\ (t,x)\in [0,T]\times \R\}$ which is an adapted process satisfying
\begin{align*}
\sup_{(t,x)\in [0,T]\times \R} \E[\|u(t,x)\|^p]<\infty.
\end{align*}
If, in addition, Assumption \ref{ass-u0v0} holds, then there exists $C=C(p,T,H,d)$ such that for any $(t,x),(s,y)\in [0,T]\times \R,$
\begin{align}\label{mom-inc}
\|u(t,x)-u(s,y)\|_{\mathbb L^p} 
\le C \left(\triangle((t,x);(s,y))\right)^{\alpha\wedge H}.
\end{align}
\item[(2)]The estimate \eqref{mom-inc} is optimal in the sense that the reverse estimate of \eqref{mom-inc} holds with $\alpha\wedge H$ replaced by $H$, in compact intervals with $s$ and $t$ being sufficiently close, when $u_0=v_0=0$, $b=0$ and $\sigma=I_{d\times d}$.
\end{itemize}
\end{tm}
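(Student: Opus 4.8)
The plan is to establish the four assertions---well-posedness, the upper H\"older bound \eqref{mom-inc}, its optimality, and the sample-path consequences---in turn, with the Gaussian character of the noise as the unifying tool. Write $G$ for the Green's function of $L$ (the heat kernel for SHE, $G(t,x)=\tfrac12\mathbf 1_{\{|x|<t\}}$ for SWE), so that the mild solution solves the fixed-point equation
\begin{align*}
u_i(t,x)=I_i^0(t,x)+\int_0^t\!\!\int_\R G(t-r,x-y)b_i(u(r,y))\,dy\,dr+\sum_{j=1}^d\sigma_{ij}\int_0^t\!\!\int_\R G(t-r,x-y)\,W_j(dr,dy),
\end{align*}
where $I^0$ carries the initial data. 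Since $W$ is white in time, the Wiener isometry reads $\E[(\int_0^t\!\int_\R g\,W_j(dr,dy))^2]=\int_0^t\int_\R|\mathcal Fg(r,\cdot)(\xi)|^2\mu(d\xi)\,dr$ with $\mu$ as in \eqref{mu}, so the basic finiteness input is
\begin{align*}
\int_0^t\int_\R|\hat G(r,\xi)|^2|\xi|^{1-2H}\,d\xi\,dr<\infty,
\end{align*}
which by the scalings $\hat G(r,\xi)=e^{-r\xi^2}$ (SHE) and $\hat G(r,\xi)=\sin(r|\xi|)/|\xi|$ (SWE) evaluates to finite multiples of $t^{H}$ and $t^{2H+1}$; here $H>0$ is exactly what rescues convergence despite the non-integrability of $\mu$ at infinity. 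Given this, a Picard iteration closed by Gr\"onwall's inequality (using Lipschitz $b$, constant $\sigma$, and the equivalence of Gaussian moments to control the stochastic term in every $L^p$) yields a unique adapted mild solution with $\sup_{(t,x)}\E[\|u(t,x)\|^p]<\infty$.

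For the upper bound I would decompose each increment $u(t,x)-u(s,y)$ along the three parts above. The stochastic convolution, being Gaussian for constant $\sigma$, obeys $\|\cdot\|_{L^p}\asymp\|\cdot\|_{L^2}$, reducing matters to its variance; writing $h=|x-y|$ and using $|e^{i\xi x}-e^{i\xi y}|^2=4\sin^2(\xi h/2)$ together with $\int_0^t e^{-2(t-r)\xi^2}dr\le\tfrac1{2\xi^2}\wedge t$, the spatial increment is bounded by a multiple of $\int_\R\sin^2(\xi h/2)|\xi|^{-1-2H}d\xi=Ch^{2H}$, giving spatial order $H$; splitting the temporal increment into the fresh-noise contribution on $[s,t]$ and the kernel change on $[0,s]$, and using $|1-e^{-a}|\le 1\wedge a$, yields temporal variance of order $|t-s|^{H}$ for SHE and the analogous hyperbolic orders for SWE. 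The term $I^0$ is handled via Assumption \ref{ass-u0v0} and $\int_\R G\,dy=1$, transferring the $\alpha$-H\"older modulus of $u_0,v_0$ to order $\alpha$ in space and $\alpha/2$ in time, while the drift term is more regular and is absorbed by a final Gr\"onwall step. Taking the worst exponent produces the factor $(\triangle)^{\alpha\wedge H}$.

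For optimality set $u_0=v_0=0$, $b=0$, $\sigma=I$, so that $u_i(t,x)=\int_0^t\int_\R G(t-r,x-y)W_i(dr,dy)$ is centered Gaussian and $\|\cdot\|_{L^p}\asymp\|\cdot\|_{L^2}$. The spatial lower bound is read off the same Fourier integral: for $(t,x),(t,y)$ in a compact set with $t$ bounded away from $0$, the factor $1-e^{-2t\xi^2}$ stays bounded below on the frequencies $\xi\sim1/h$ that carry the mass, so $\E[|u(t,x)-u(t,y)|^2]\ge c\,h^{2H}$. For the temporal direction I split $u(t,x)-u(s,x)$ into the independent contributions of the noise on $[s,t]$ and on $[0,s]$: for SHE the fresh-noise piece already has the right order, $\int_0^{t-s}\int_\R|\hat G(r,\xi)|^2|\xi|^{1-2H}d\xi\,dr\ge c|t-s|^{H}$, so by independence the full variance dominates it; for SWE the fresh-noise piece is of the smaller order $|t-s|^{2H+1}$, and instead the kernel-increment term on $[0,s]$ dominates, bounded below through $\int_\R\sin^2(|t-s|\,|\xi|/2)|\xi|^{-1-2H}d\xi\ge c|t-s|^{2H}$ via the scaling $\xi\mapsto\xi/|t-s|$. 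These directional bounds give the reverse of \eqref{mom-inc} with exponent $H$ near the diagonal.

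The sample-path statements then follow by passing between moments and paths. Estimate \eqref{mom-inc}, inserted into Kolmogorov's continuity theorem and letting $p\to\infty$, gives a version in $\C_{\frac{\alpha\wedge H}2-,(\alpha\wedge H)-}$. For the negative assertion, suppose some version were spatially $\beta$-H\"older for a $\beta>H$ on a compact interval; the H\"older seminorm is then a finite functional of the Gaussian field, so Fernique's theorem makes it square-integrable, forcing $\E[|u(t,x)-u(t,y)|^2]\le C|x-y|^{2\beta}$ and contradicting $\E[|u(t,x)-u(t,y)|^2]\ge c|x-y|^{2H}$ as $|x-y|\downarrow0$; the temporal direction is identical. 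Hence no version lies in $\C_{\frac H2+,H+}$. The main obstacle throughout is the two-sided control of the Green's-function Fourier integrals in the rough regime $H<1/2$, where $\mu$ is non-integrable at infinity and high frequencies dominate; securing matching lower rather than merely upper bounds---via the independence of noise over disjoint time intervals for SHE and a direct scaling of the kernel-increment integral for SWE---is the delicate point.
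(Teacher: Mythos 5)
Your proposal is correct and follows essentially the same route as the paper: Picard iteration closed by Gr\"onwall for well-posedness, It\^o-isometry/Fourier estimates of the Green's functions for the upper bound \eqref{mom-inc}, direct lower bounds on the same Fourier integrals (splitting temporal increments over $[s,t]$ and $[0,s]$, with the kernel-increment term carrying the SWE lower bound) for optimality, and Kolmogorov resp.\ Fernique for the positive resp.\ negative sample-path assertions. The only minor variation is that for the SHE temporal lower bound you invoke the fresh-noise term on $[s,t]$ alone and discard the nonnegative remainder, whereas the paper also bounds the kernel-increment term on $[0,s]$ from below; both yield the required order $|t-s|^{H}$ for the variance.
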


{\color{black}Base on the estimate \eqref{mom-inc} in Theorem \ref{wel-hol}, we further study the optimal H\"older continuity exponents of the exact solution of Eq. \eqref{spde}, which is in particular motivated by the research of hitting probabilities of systems of SPDEs.
 Given a random field $X=\{X(r)\}_{r \in B}$
with  $B$  being some Borel measurable subset of  $\mathbb{R}^{N}$, we say that $X$ hits a Borel set  $A \subseteq \mathbb{R}^{d}$ if 
 $\PP\{X(B) \cap A \neq \emptyset\}>0$, where $X(B)$  denotes the range of  $B$  under the random map  $r \mapsto X(r)$. In this case,  $A \subseteq \mathbb{R}^{d}$  is also called polar for  $X$;  otherwise $A$  is called nonpolar. Recently, there has been much progress on hitting probabilities of systems of SPDEs; see e.g. \cite{DKN07,NV09,DKN09,DKN13} for systems of SHEs and \cite{DS10,DS15} for systems of SWEs. Most of them in these literatures are driven by the space-time white noise or a
 noise whose spectral measure in space is given by a Riesz kernel.
 Following this line of investigation,
 based on Theorem \ref{wel-hol} and the criterion on hitting probabilities} developed in \cite{BLX09}, we obtain another
result Theorem \ref{0sigma} of this paper, which gives lower and upper bounds of hitting probabilities for the solution of Eq. \eqref{spde} for the linear case. 
 We say that a compact set $I\subset\R$ is non-trivial if there exist two non-empty closed intervals $I_1,\,I_2\subset\R$ such that $I_1\subset I\subset I_2$.
  \begin{tm}\label{0sigma}
 Assume that $u_0=v_0=0$, $b=0$, and $\sigma\in\R^{d\times d}$ is invertible.
 Let $I$ and $J$ be two non-trivial compact sets in $(0,T]$ and $\R$, respectively.  If  $A \subseteq \mathbb{R}^{d}$  is a Borel set,  then there exists $C=C(A,I,J,H,\sigma,d)>0$ such that
\begin{equation}\label{cd-new}
C^{-1}\operatorname{Cap}_{d-Q}(A) \le \mathbb{P}\left\{u(I\times J)\cap A \neq \emptyset\right\} \le C \mathscr{H}_{d-Q}(A),
\end{equation}
where $Q=3/H$ for SHE and $Q=2/H$ for SWE.
 \end{tm}

{\color{black}
For any set $\{y\}$ of single point with $y\in\R^d$, it holds that $\operatorname{Cap}_{d-Q}(\{y\})>0$ for $d<Q$ and $\mathscr{H}_{d-Q}(A)=0$ for $d>Q$. Thus,
it can be seen from Theorem \ref{0sigma} that  points are nonpolar for $u$ when $d<Q$, and points are polar for $u$ when $d>Q$. Here, the parameter $Q$ is the so-called critical dimension of hitting probabilities of the exact solution $u$.
To the best of our knowledge, Theorem 1.2 is the first result on the hitting probabilities of Eq. \eqref{spde} with $H<\frac{1}{2}$. This
   supplements the existing results about hitting probabilities of systems of SHEs or SWEs driven by a Gaussian noise that is white in time and fractional in space with Hurst index $H\ge\frac{1}{2}$ (see e.g. \cite{DS10,MT02}).  

   }

The rest of this paper is organized as follows. Some preliminaries including required estimates about Green's functions and stochastic integral for FBS are given in the next section.
In Section \ref{sec-wel}, we prove the well-posedness and H\"older continuity of Eq. \eqref{spde}. The optimality of H\"older exponents is presented in Section \ref{sec-opt}.
 Finally, based on the optimal H\"older continuity, we obtain the lower and upper bounds of the hitting probabilities for the solution in Section \ref{sec-hit}.

\section{Preliminaries}
We begin this section with introducing the following frequently used notations:
Without illustrated, all supremum with respect to $t$ (respectively, $x$ and $n$) denotes $\sup_{t\in [0,T]}$ (respectively, $\sup_{x\in \R}$ and $\sup_{n\in \N_+}$).
We also use $C$ (and $C_1$, $C_2$, etc.) to denote a generic constant which may change from line to line.

\subsection{Green's Functions and Mild Solutions}

We denote by $G_t(x)$ the Green's function of $Lu(t,x)=0$. 
It has the explicit form
\begin{align}\label{gre}
G_t(x)=
\begin{cases}
\frac1{\sqrt{4\pi t}}\exp (-\frac{|x|^2}{4t}), &\quad \text{for SHE};\\
\frac12 \mathbf 1_{\{|x|<t\}}, &\quad \text{for SWE},
\end{cases}
\end{align}
where $\mathbf 1$ denotes the indicator function.
The mild solution of Eq. \eqref{spde} is defined as an $\FFF_t$-adapted random field satisfying
\begin{align} \label{mild}
u(t,x)
&=\omega(t,x)+\int_0^t\int_{\R} G_{t-\theta}(x-\eta)b(u(\theta,\eta)) d\eta
d\theta \nonumber  \\
&\quad +\int_0^t\int_{\R} G_{t-\theta}(x-\eta)\sigma W(d\theta,d\eta).
\end{align}
Here,  $\omega(t,x)$ is the solution of the homogeneous equation with the same initial conditions as given in Section \ref{sec-1}. More precisely,
\begin{align}\label{wtx}
\omega(t,x)
=
\begin{cases}
\frac1{\sqrt{4\pi t}} \int_{\R} \exp(-\frac{|x-\eta|^2}{4t})u_0(\eta)d\eta, 
&\quad \text{for SHE};\\
\frac12 \left( u_0(x+t)-u_0(x-t)\right)
+\frac 12\int_{x-t}^{x+t} v_0(\eta)d\eta, &\quad \text{for SWE}.
\end{cases}
\end{align}

We need the following known results.

\begin{lm} \label{bjq}
\begin{enumerate}
\item[(1)]
Let $g:\R\rightarrow \R$ be a tempered function whose Fourier transform in $S'(\R)$ is a locally integrable function. 
Then for any $H\in (0,1/2)$,
\begin{align}\label{dif-cor}
c_H\int_{\R}|\F g(\xi)|^2 |\xi|^{1-2H} d\xi
&=C_H \int_{\R^2}
\frac{|g(x)-g(y)|^2}{|x-y|^{2-2H}} dxdy
\end{align}
when either one of the two integrals is finite, with $c_H$ given by \eqref{mu} and $C_H=\frac{H(1-2H)}{2}$.

\item[(2)] 
For any $\alpha\in (-1,1)$ and $t\ge0$,
\begin{align}\label{dif-cor-est}
\int_0^t \int_{\R} |\F G_\theta(\xi)|^2 |\xi|^\alpha d\xi d\theta
=
\begin{cases}
C_1 t^{\frac{1-\alpha}2}, &\quad \text{for SHE};\\
C_2 t^{2-\alpha}, &\quad \text{for SWE},
\end{cases}
\end{align}
for some constants $C_1$ and $C_2$ depending on $\alpha$.

\item[(3)]
For any $\alpha\in (-1,1)$ and any $h\in \R$, there exists $C=C(T)$ such that
\begin{align}\label{dif-cor-est-t}
\int_0^T \int_{\R}|\F G_{t+h}(\xi)-\F G_t(\xi)|^2 |\xi|^\alpha d\xi dt
&\le
\begin{cases}
Ch^{\frac{1-\alpha}2}, &\quad \text{for SHE};\\
Ch^{1-\alpha}, &\quad \text{for SWE}.
\end{cases}\\
\int_0^T \int_{\R} (1-\cos(\xi h)) 
 |\F G_t (\xi)|^2 |\xi|^\alpha d\xi dt
&\le C|h|^{1-\alpha}. \label{dif-cor-est-x}
\end{align}
\end{enumerate}
\end{lm}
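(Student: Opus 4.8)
The plan is to establish the three items separately, each ultimately reducing to Plancherel's identity, an explicit evaluation of the relevant Fourier transforms, and a single scaling substitution, with the recurring constant integrals dictating the admissible range of exponents. For item (1), I would first reduce the double integral to a Fourier-side expression: writing $z=x-y$ and using Tonelli (legitimate by nonnegativity, so that the identity holds in $[0,\infty]$ regardless of finiteness), I obtain
\[
\int_{\R^2}\frac{|g(x)-g(y)|^2}{|x-y|^{2-2H}}\,dx\,dy
=\int_{\R}\frac{1}{|z|^{2-2H}}\Big(\int_{\R}|g(x)-g(x-z)|^2\,dx\Big)\,dz.
\]
By Plancherel, and since translation by $z$ multiplies $\F g$ by $e^{-iz\xi}$, the inner integral equals $\int_{\R}2(1-\cos(z\xi))|\F g(\xi)|^2\,d\xi$. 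Interchanging the order of integration once more and evaluating the $z$-integral by the scaling $u=\xi z$,
\[
\int_{\R}\frac{2(1-\cos(z\xi))}{|z|^{2-2H}}\,dz
=|\xi|^{1-2H}\int_{\R}\frac{2(1-\cos u)}{|u|^{2-2H}}\,du =: \kappa_H\,|\xi|^{1-2H},
\]
which yields the claimed identity up to the constant $\kappa_H$; the remaining task is to check that $C_H\kappa_H=c_H$, i.e. that the explicit value of $\int_{\R}(1-\cos u)|u|^{2H-2}\,du$ matches $c_H/C_H$ with $c_H,C_H$ as in the statement, a standard Gamma-function evaluation.

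For item (2), I would compute $\F G_t$ explicitly: for SHE, $|\F G_t(\xi)|^2=e^{-2t\xi^2}$, and for SWE, $|\F G_t(\xi)|^2=\xi^{-2}\sin^2(t\xi)$. Performing the $\theta$-integration first, the SHE case gives $\int_0^t e^{-2\theta\xi^2}\,d\theta=(1-e^{-2t\xi^2})/(2\xi^2)$, after which the substitution $u=\sqrt t\,\xi$ factors out $t^{(1-\alpha)/2}$ and leaves the constant $\tfrac12\int_{\R}(1-e^{-2u^2})|u|^{\alpha-2}\,du$; for SWE the scaling $u=\theta\xi$ factors out $\theta^{1-\alpha}$ from the spatial integral, whose $\theta$-integral then produces $t^{2-\alpha}$, leaving the constant $\int_{\R}\sin^2(u)|u|^{\alpha-2}\,du$. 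In both cases the residual constant integral is finite exactly when $\alpha>-1$ (integrability at the origin, where the numerators vanish quadratically) and $\alpha<1$ (integrability at infinity, where the numerators are bounded), which simultaneously gives the convergence criterion $\alpha\in(-1,1)$, the explicit powers of $t$, and the identification of $C_1,C_2$ with the corresponding residual constants.

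For item (3), I would exploit the elementary identities $\F G_{t+h}-\F G_t=e^{-t\xi^2}(e^{-h\xi^2}-1)$ for SHE and, via a sum-to-product formula, $\F G_{t+h}(\xi)-\F G_t(\xi)=2\xi^{-1}\cos((t+h/2)\xi)\sin(h\xi/2)$ for SWE. After integrating in $t$ over $[0,T]$ using the crude bounds $\int_0^T e^{-2t\xi^2}\,dt\le(2\xi^2)^{-1}$ and $\int_0^T\cos^2((t+h/2)\xi)\,dt\le T$, the factor $(1-e^{-h\xi^2})^2$ (resp. $\sin^2(h\xi/2)$) remains, and the substitution $u=\sqrt h\,\xi$ (resp. $u=h\xi/2$) extracts $h^{(1-\alpha)/2}$ (resp. $h^{1-\alpha}$), leaving a finite constant integral for $\alpha\in(-1,1)$. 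For the spatial estimate \eqref{dif-cor-est-x}, I would note that in both cases the time integral satisfies $\int_0^T|\F G_t(\xi)|^2\,dt\le C\xi^{-2}$, so that bounding and then scaling $u=\xi h$ reduces the left side to $C\,|h|^{1-\alpha}\int_{\R}(1-\cos u)|u|^{\alpha-2}\,du$, again finite precisely for $\alpha\in(-1,1)$; the quadratic vanishing of $1-\cos(\xi h)$ near the origin absorbs the singularity of $\xi^{-2}$ there, and the estimate is uniform in the sign of $h$.

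The main obstacle I anticipate is not conceptual but one of bookkeeping: justifying the interchanges of integration in item (1) cleanly (handled by Tonelli together with the ``one side finite'' hypothesis) and, above all, verifying the exact constant identity $C_H\kappa_H=c_H$, which requires the precise closed form of $\int_{\R}(1-\cos u)|u|^{2H-2}\,du$ in terms of Gamma and sine factors rather than merely its finiteness. The scaling arguments in items (2)--(3) are routine once the Fourier transforms are in hand; the only care needed there is to use time bounds (such as $(2\xi^2)^{-1}$ and $C\xi^{-2}$) that remain compatible with integrability both at the origin and at infinity, which is exactly where the constraint $\alpha\in(-1,1)$ enters.
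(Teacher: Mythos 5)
Your computations are correct, but note that the paper itself contains no proof of this lemma: its ``proof'' is a pointer to \cite{BJQ15} (Proposition 2.8 for item (1), Lemmas 3.1, 3.5 and 3.4 for items (2)--(3)). Your self-contained argument therefore differs from the paper in presentation, though it essentially reconstructs what the cited reference does: $\F G_t(\xi)=e^{-t\xi^2}$ for the heat kernel and $\F G_t(\xi)=\sin(t\xi)/\xi$ for the wave kernel, time integration first, then one scaling substitution, with the residual constant integrals dictating $\alpha\in(-1,1)$. Three small points to nail down. First, the constant identity you deferred in item (1) does close, but only once the Fourier normalization is fixed: with $\F g(\xi)=\int_\R e^{-\mathbf i\xi x}g(x)\,dx$, Plancherel contributes a factor $\tfrac1{2\pi}$, and the classical evaluation $\int_0^\infty u^{2H-2}(1-\cos u)\,du=-\Gamma(2H-1)\sin(\pi H)$ (positive, since $\Gamma(2H-1)<0$ for $H<1/2$) together with $\Gamma(2H+1)=2H(2H-1)\Gamma(2H-1)$ gives $C_H\cdot\tfrac1{2\pi}\cdot 4\left(-\Gamma(2H-1)\right)\sin(\pi H)=\tfrac{\Gamma(2H+1)\sin(\pi H)}{2\pi}=c_H$ exactly; with a unitary transform your $\kappa_H$ would be off by $2\pi$, so this must be made explicit. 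Second, your reading of the ``if and only if'' in item (2) as referring to the time-integrated quantity \eqref{dif-cor-est} is the right one: for fixed $t$ the SHE integral $\int_\R e^{-2t\xi^2}|\xi|^\alpha\,d\xi$ converges for \emph{all} $\alpha>-1$ by Gaussian decay, and the upper restriction $\alpha<1$ only appears after the $\theta$-integration produces $(1-e^{-2t\xi^2})/(2\xi^2)$, as in your computation. Third, your substitutions $u=\sqrt h\,\xi$ and $u=h\xi/2$ tacitly take $h\ge 0$ (for $h<0$ the factor $e^{-t\xi^2}(e^{-h\xi^2}-1)$ is not controlled this way); this matches the intent of the statement and of the cited lemmas, but should be said. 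With these clarifications your argument is complete and verifies the exact powers and constants; what the paper's citation buys is brevity, and what your route buys is a self-contained proof whose only nontrivial external input is the Gamma-function evaluation above.
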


\begin{proof}
We refer to \cite{BJQ15}, Proposition 2.8, Lemma 3.1, Lemma 3.5, and Lemma 3.4 for \eqref{dif-cor}, \eqref{dif-cor-est}, \eqref{dif-cor-est-t}, and \eqref{dif-cor-est-x}, respectively.
\end{proof}

\subsection{Fractional Noise and Stochastic Calculus}

In this subsection, we define the stochastic integral for the FBS $W$ determined by \eqref{fbs} with $H\le 1/2$ in one dimensional case, i.e., $d=1$. 
The arguments can be straightforward extended to general dimension $d\in \N_+$.

Recall that the fractional Brownian motion (FBM) in $\R$ with index $H\in (0,1)$ is a centered Gaussian process $B=\{B(x)\}_{x\in \R}$ with covariance
\begin{align*}
\E[B(x)B(y)]
=\int_{\R} \F\mathbf 1_{[0,x]}(\xi) \overline{\F \mathbf 1_{[0,y]}(\xi)} \mu(d\xi).
\end{align*}
In particular, the FBM with index $H=1/2$ coincides with the Brownian motion.
It is straightforward to verify that when $H\le 1/2$ then for any $\eta\in (1-H,\infty)$,
\begin{align*}
\int_{\R} \frac{\mu(d\xi)}{(1+|\xi|^2)^\eta}<\infty.
\end{align*}
On the other hand, when $H\in (1/2,1)$, the Fourier transform of 
$\mu$ is the locally integrable function $f(x)=H(2H-1)|x|^{2H-2}$. 
In this case, it is {\color{black}proportional} to the Gaussian random field determined by a Riesz kernel, which is investigated by a lot of authors (see, e.g., \cite{Dal99, DF98, DKN13, DS05, DS09, DS15, HHN14, SS02} and references therein).
However, when $H< 1/2$, the Fourier transform of $\mu$ is a genuine distribution and not a tempered measure.

The author in \cite{Jol10} shows that the domain of the Wiener integral for the FBM $B$ in $\R$ with index $H\in (0,1)$ is the completion of $\C_0^\infty(\R)$, the space of infinitely differentiable functions with compact support, with respect to the inner product 
\begin{align}\label{rkhs-b}
\<\varphi,\psi\>_{\HH}
:=\E[B(\varphi)B(\psi)]
=\int_{\R} \F \varphi(\xi) \overline{\F \psi(\xi)} \mu(d\xi),
\ \varphi,\psi\in \C_0^\infty(\R),
\end{align}
which coincides with the space of distribution $S\in \mathcal S'(\R)$, whose Fourier transform $\F S$ is a locally integrable function satisfying
\begin{align*}
\int_{\R} |\F S(\xi)|^2 \mu(d\xi)<\infty.
\end{align*}
There is a Hilbert space naturally associated with the FBM $B$.
Indeed, let $\HH$ be the completion of $\C_0^\infty(\R)$ with respect to the inner product defined by \eqref{rkhs-b}.
Therefore, $\HH$ is the reproducing kernel Hilbert space (RKHS) of the FBM $B$.

It is known that under assumption \eqref{mu}, $W=(W_1,\ldots,W_d)$ is an $\R^d$-valued FBS with temporal Hurst index $1/2$ and spatial Hurst index $H\le 1/2$
(including the standard Brownian sheet where $H=1/2$) 
 on the stochastic basis $(\Omega,\FFF,(\FFF_t)_{t\in [0,T]},\PP)$, i.e., for any $(t,x),(s,y)\in \R_+\times \R$ and $i,j\in \N_d$,
\begin{align}\label{fbs}
\E[W_i(t,x)W_j(s,y)]
=\delta_{ij}\left(t\wedge s\right)\frac{|x|^{2H}+|y|^{2H}-|x-y|^{2H}}{2}
\end{align}
with $t\wedge s:=\min\{t,s\}$.
In this level, $W_i, i=2,\ldots,d$, are independent copies of $W_1$, which is
a centered Gaussian family of random variables 
$\{W_1(\varphi):\ \varphi\in \mathcal \C_0^\infty([0,T]\times \R)\}$ defined on $(\Omega,\FFF,\PP)$, with covariance 
\begin{align}\label{rkhs-w1}
\E[W_1(\varphi)W_1(\psi)]
=\int_0^T \int_{\R} \F \varphi(t,\cdot)(\xi) \overline{\F \psi(t,\cdot)(\xi)} \mu(d\xi) dt
=:\<\varphi,\psi\>_{\HH_T}.
\end{align}
Here $\HH_T$, the completion of $\C_0^\infty([0,T]\times \R)$ with respect to the inner product defined by \eqref{rkhs-w1}, is the RKHS of the FBS $W_1$.
It can be identified with the homogenous Sobolev space of order $1/2-H$ of functions with values in $L^2(\R)$ (see, e.g., \cite{AMN01, HHLNT17(AOP)}).
Nevertheless, we will use another simpler characterization of 
$\HH_T$, in terms of \eqref{dif-cor} in Lemma \ref{bjq}, which is more suitable to our case:
\begin{align*}
\HH_T:=\left\{\phi\in \C_0^\infty([0,T]\times \R):\ \int_0^T \int_{\R^2} \frac{|\phi(s,y)-\phi(s,z)|^2}
{|y-z|^{2-2H}}dydzds<\infty \right\}.
\end{align*}

We have the following It\^o isometry.

\begin{tm}\label{ito-bdg}
Assume that $\varphi\in \HH_T$.
Then for any $t\in [0,T]$,
\begin{align}\label{ito}
&\E\left[\left| \int_0^t\int_{\R} \varphi(s,y) W_1(ds,dy)\right|^2 \right] 
=\int_0^t \int_{\R} |\F \varphi(s,\cdot)(y)|^2 \mu(dy) ds.
\end{align}
\end{tm}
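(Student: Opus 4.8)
The plan is to prove \eqref{ito} by the standard three-step construction of the Wiener integral against a Gaussian field—first on elementary functions, then by density, and finally with a truncation in time—using the bridge identity \eqref{dif-cor} to pass from the physical covariance \eqref{fbs} to the spectral norm appearing on the right-hand side of \eqref{rkhs-w}.

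First I would verify the identity on product step functions $f=\sum_{i,j} f_{ij}\,\mathbf 1_{(t_i,t_{i+1}]}\otimes \mathbf 1_{(x_j,x_{j+1}]}$ supported in $[0,t]\times\R$, for which $W(f)$ is the finite Riemann sum of the rectangular increments of $W$. Expanding $\E[|W(f)|^2]$ and substituting \eqref{fbs}, the temporal factor $\theta\wedge\theta'$ collapses the rectangular increments exactly as in the Brownian (temporal white-noise) isometry, turning the time sum into $\int_0^t(\cdots)\,ds$, while for each fixed time slice the spatial factor $\tfrac12(|y|^{2H}+|z|^{2H}-|y-z|^{2H})$ reassembles the physical-space difference functional $\int_{\R^2}|f(s,y)-f(s,z)|^2|y-z|^{-(2-2H)}\,dy\,dz$. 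Applying \eqref{dif-cor} in the spatial variable at each $s$ converts this into $c_H\int_\R |\F f(s,\cdot)(\xi)|^2|\xi|^{1-2H}\,d\xi=\int_\R |\F f(s,\cdot)(\xi)|^2\,\mu(d\xi)$, so that $\E[|W(f)|^2]=\int_0^t\int_\R |\F f(s,\cdot)(\xi)|^2\,\mu(d\xi)\,ds=\|f\|_{\HH_T}^2$.

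Next, since $\HH_T$ is by definition the completion of $\C_0^\infty([0,T]\times\R)$ under $\<\cdot,\cdot\>_{\HH_T}$ and the product step functions are dense there, the map $f\mapsto W(f)$ extends to a linear isometry $W:\HH_T\to L^2(\Omega)$, giving $\E[|W(\varphi)|^2]=\|\varphi\|_{\HH_T}^2$ for every $\varphi\in\HH_T$. Reading $\int_0^t\int_\R \varphi\,W(d\theta,d\eta)$ as $W(\varphi\,\mathbf 1_{[0,t]\times\R})$, I would then check the truncation: because the seminorm in \eqref{rkhs-w} only measures spatial differences, multiplying by the time-indicator $\mathbf 1_{[0,t]}$ introduces no spatial singularity and merely restricts the finite time integral from $[0,T]$ to $[0,t]$, so $\varphi\,\mathbf 1_{[0,t]}\in\HH_T$; and since the spatial Fourier transform commutes with the time-indicator, $\F(\varphi\,\mathbf 1_{[0,t]})(s,\cdot)(\xi)=\mathbf 1_{[0,t]}(s)\,\F\varphi(s,\cdot)(\xi)$, whence $\|\varphi\,\mathbf 1_{[0,t]}\|_{\HH_T}^2=\int_0^t\int_\R |\F\varphi(s,\cdot)(\xi)|^2\,\mu(d\xi)\,ds$. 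Combined with the isometry this is exactly \eqref{ito}.

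The hard part will be the first step, namely making the passage from \eqref{fbs} to \eqref{rkhs-w} rigorous for $H\in(0,1/2)$. Here the spectral density $c_H|\xi|^{1-2H}$ is not locally integrable near the origin, so one cannot naively pair $|\F f|^2$ against $\mu$ as an ordinary integral; the equivalence \eqref{dif-cor}—valid precisely when one of its two sides is finite—is what legitimizes this pairing and forces the two expressions to be finite simultaneously. I would therefore need to keep track of the fact that $\F f(s,\cdot)$ is a genuine locally integrable function (not merely a tempered distribution) for the elementary integrands, and that finiteness of the physical-space difference functional is preserved along the density approximation, so that \eqref{dif-cor} may be applied throughout the limiting procedure.
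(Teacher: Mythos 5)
Your proposal is correct, but it is doing genuinely different work from the paper: the paper does not prove Theorem \ref{ito-bdg} at all, it simply cites \cite{BJQ15} (Theorem 2.7), \cite{HHLNT15} (Proposition 2.3) and \cite{Jol10} (Proposition 4.1). What you have written is a self-contained reconstruction of the argument those references contain: the isometry on product step functions (where the temporal factor $\theta\wedge\theta'$ in \eqref{fbs} produces the white-noise collapse in time, and the spatial second differences of $\tfrac12(|y|^{2H}+|z|^{2H}-|y-z|^{2H})$ produce the fBm-increment quadratic form, which for step integrands equals $C_H\int_{\R^2}|g(y)-g(z)|^2|y-z|^{2H-2}\,dy\,dz$ --- checkable on indicators $\mathbf 1_{[0,a]}$, where both sides give $a^{2H}$, and then extended by bilinearity), the passage to the spectral form via \eqref{dif-cor}, the isometric extension to the completion $\HH_T$, and the time truncation, which is harmless because the norm \eqref{rkhs-w} sees only spatial Fourier transforms and an $L^2$ integral in time. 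This buys the reader a proof instead of a pointer, at the cost of two routine but nontrivial verifications you correctly flag as needing care: mutual density of step functions and $\C_0^\infty$ functions in the $\HH_T$-seminorm (true for $H\in(0,1/2)$ since indicators lie in the homogeneous Sobolev space of order $1/2-H<1/2$), and consistency of the step-function Riemann sums with the Gaussian family $\{W(\varphi)\}$ the paper works with.

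One factual slip in your final paragraph, which does not damage the proof: the spectral density $c_H|\xi|^{1-2H}$ with $H\in(0,1/2)$ \emph{is} locally integrable near the origin (it is continuous and vanishes there); the genuine difficulty, as the paper itself states later in Section 2.2, is that for $H<1/2$ the Fourier transform of $\mu$ is a distribution rather than a locally integrable function, equivalently that $\mu$ grows at infinity, so membership in $\HH_T$ requires positive spatial Sobolev regularity of the integrand. Your use of \eqref{dif-cor} as the bridge identity, with the bookkeeping that one side must be finite and that $\F f(s,\cdot)$ is a genuine function for elementary integrands, is exactly the right way to handle this; only the stated reason for the difficulty should be corrected.
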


\begin{proof}
See \cite[Theorem 2.7]{BJQ15} or \cite[Proposition 4.1]{Jol10}.
\end{proof}

\begin{rk}
\begin{enumerate}

\item
When $H=1/2$, the noise in Eq. \eqref{spde} reduces to the space-time white noise: 
$\mu(d\xi)=d\xi$. 
By Plancherel theorem, we have
\begin{align*}
\E\left[\left| \int_0^t\int_{\R} \varphi(s,y) W_1(ds,dy)\right|^2 \right] 
=\int_0^t \int_{\R} |\varphi(s,y)|^2 dyds.
\end{align*}
Thus all results, such as Theorem \ref{wel-hol}, of the paper hold for $H=1/2$.

\item
Since $\int_0^t\int_{\R} \varphi(s,y) W_1(ds,dy)$ is a centered Gaussian random variable, for any $p\ge 2$, there exists $C=C(p)$ such that for any $t\in [0,T]$,
{\small
\begin{align}\label{bdg}
\E\left[\left| \int_0^t\int_{\R} \varphi(s,y) W_1(ds,dy)\right|^p \right] 
=C\left( \int_0^t \int_{\R} |\F \varphi(s,\cdot)(y)|^2 \mu(dy) ds \right)^\frac p2.
\end{align}
}
\end{enumerate}
\end{rk}

\section{Well-posedness and H\"{o}lder Continuity}
\label{sec-wel}

We use the Picard iteration to prove the well-posedness of Eq. \eqref{spde} in this section, and then derive the sample-path H\"older continuity of the solution.

Given constants $\beta_1, \beta_2\in (0,1]$, denote by 
$\C_{\beta_1, \beta_2}:=\C_{\beta_1, \beta_2}([0,T]\times \R; \R^d)$ the set of functions $v: [0,T]\times \R\rightarrow \R^d$ which are temporally $\beta_1$-H\"older continuous and spatially $\beta_2$-H\"older continuous.
More precisely, for each compact subset $D\subset \R_+\times \R$, there is a finite constant $C$ such that for all $(t, x),(s, y)\in D$,
\begin{align*}
\|v(t,x)-v(s,y)\| \le C(|t-s|^{\beta_1}+|x-y|^{\beta_2}),
\end{align*} 
where we denote $\|v\|:=\sqrt{v_1^2+\cdots+v_d^2}$ for $v=(v_1,\ldots,v_d)\in \R^d$. 
Let 
\begin{align*}
\C_{\beta_1-, \beta_2-}:
=\bigcap_{0<\alpha_1<\beta_1}\bigcap_{0<\alpha_2<\beta_2} \C_{\alpha_1, \alpha_2}, \quad
\C_{\beta_1+, \beta_2+}:
=\bigcap_{\beta_1<\alpha_1\le 1}\bigcap_{\beta_2<\alpha_2\le 1} \C_{\alpha_1, \alpha_2}.
\end{align*} 
Similarly, one can also define $\C_{\beta_1}([0,T])$, $\C_{\beta_1\pm}([0,T])$ and, respectively,
$\C_{\beta_2}(\R)$, $\C_{\beta_2\pm}(\R)$ as the H\"older space of $v$ in temporal direction and spatial direction.

Define the Picard iteration scheme as
\begin{align}\label{pic}
\begin{split}
u^0(t,x):&=\omega(t,x);   \\
u^{n+1}(t,x):&=\omega(t,x)
+\int_0^t\int_{\R} G_{t-\theta}(x-\eta)b(u^n(\theta,\eta))d\eta d\theta  \\
&\quad +\int_0^t\int_{\R} G_{t-\theta}(x-\eta)\sigma W(d\theta,d\eta)
\end{split}
\end{align}
for $n\in\{0,1,2,\ldots\}$. {\color{black}We proceed to prove  Theorem \ref{wel-hol} (1), which gives the well-posedness and H\"older continuity of  Eq. \eqref{spde}.}

%

{\color{black}\textit{Proof of Theorem \ref{wel-hol} (1):}}
We start with verifying the uniform boundedness of the $p$-th moments of $u^n$.
The H\"{o}lder inequality and equality \eqref{bdg} imply the existence of $C=C(p,H,T,d)$ such that
\begin{align}\label{untx}\notag
\E[\|u^{n+1}(t,x)\|^p]  
\le& C\E[\|\omega(t,x)\|^p] +C\int_0^t \E\left[\left\|\int_{\R} G_{t-\theta}(x-\eta)b(u^n(\theta,\eta))d \eta \right\|^p \right] d \theta \\
&+ C\left(\int_0^t \int_{\R}
|\F G_\theta(x-\cdot)(\xi)|^2 |\xi|^{1-2H} d\xi d\theta\right)^\frac p2.
\end{align}
Applying the property of Fourier transform: $\F g(x+\cdot) (\xi)=e^{ix\xi}\F g(\xi)$, $\xi$-a.e., and the estimate \eqref{dif-cor-est}, the last term in the right hand side of \eqref{untx} is bounded uniformly.
By Minkowskii's inequality,
\begin{align*}
&\int_0^t \E\left[\left\|\int_{\R} G_{t-\theta}(x-\eta)b(u^n(\theta,\eta))d\eta \right\|^p \right] d\theta \\
\le& \int_0^t  \left(\int_{\R} G_{t-\theta}(x-\eta) d\eta \right)^p \left(\sup_{x} \E[\|b(u^n(\theta,x))\|^p] \right) d\theta \\
\le& C+C\int_0^t \left(\sup_{x}\E[\|u^n(\theta,x)\|^p]\right) g(t-\theta) d\theta,
\end{align*}
where 
\begin{align}\label{gtp}
g(t):=\left( \int_{\R} G_t(y) dy \right)^p
=\begin{cases}
1,&\quad \text{for SHE}   \\
t^p,&\quad \text{for SWE}
\end{cases}
\end{align}
is uniformly bounded in $[0,T]$.

The equalities \eqref{wtx} and \eqref{gtp} with $p=1$, in combination with the fact that $u_0$ and $v_0$ have uniformly bounded $p$-th moments, indicate that
\begin{align*}
\|\omega(t,x)\|_{\mathbb L^p}\le \sup_x\|u_0(x)\|_{\mathbb L^p}
+T\sup_x\|v_0(x)\|_{\mathbb L^p}\le C.
\end{align*}
Gathering the above estimates together, it follows that
\begin{align*}
\E[\|u^{n+1}(t,x)\|^ p]
\le C+C\int_0^t \left(\sup_{x}\E[\|u^n(\theta,x)\|^p]\right) d\theta.
\end{align*}
Therefore, if we set $M^n(t):=\sup_{x} \E[\|u^n(t,x)\|^ p]$, then
\begin{align*}
M^{n+1}(t)
\le C+C\int_0^t M^n(\theta) d\theta.
\end{align*}
Gronwall lemma yields that $\sum_{n=1}^\infty M^n(t)$ converges uniformly on $[0,T]$. In particular, $\sup_n \sup_t M^n(t)<\infty$, i.e., $\sup_n \sup_{t,x} \E[\|u^{n}(t,x)\|^p]<\infty$.

Next we prove that the scheme \eqref{pic} is convergent and the limit is the unique mild solution of Eq. \eqref{spde}.
It is clear that 
\begin{align*}
u^{n+1}(t,x)-u^n(t,x)
=\int_0^t\int_{\R} G_{t-\theta}(x-\eta)
\left( b(u^n(\theta,\eta))-b(u^{n-1}(\theta,\eta))\right) d\eta d\theta.
\end{align*}
Similar to the proof of moments' boundedness of $u^n$, we have
\begin{align*}
&\E[\|u^{n+1}(t,x)-u^n(t,x)\|^p]
\le C\int_0^t \left(\sup_{x}\E[\|u^n(\theta,x)-u^{n-1}(\theta,x)\|^ p]\right) d\theta,
\end{align*}
Define $H^n(t):=\sup_{x} \E[\|u^{n+1}(t,x)-u^n(t,x)\|^p]$. 
Then
\begin{align*}
H^n(t)
&\le C\int_0^t H^{n-1}(\theta) d\theta,
\end{align*}
from which we conclude by Gronwall lemma that 
$\sum_{n=1}^\infty H^n(t)$ converges uniformly on $[0,T]$. This shows that for each $t$ and $x$, $u^n(t,x)$ converges in $\mathbb L^p$ to $u(t,x)$, which is the mild solution of \eqref{spde} satisfying
$\sup_{t,x}\E[\|u(t,x)\|^p]<\infty$.
The same procedure yields the uniqueness of the mild solution of \eqref{spde}. 

Finally, we prove the H\"older continuity of the solution of Eq. \eqref{spde}.
Without loss of generality, assume that $0\le s<t\le T$.
For SHE, applying the semigroup property of $G$  
as well as the fact that $\int_{\R} G_t(\eta)d\eta=1$ and then using Jensen inequality, we obtain
\begin{align*}
&\E\left[\left\|\int_{\R} \left(G_t (x-\eta)-G_s (x-\eta)\right) u_0(\eta)d\eta\right\|^p\right]  \\
=&\E\left[\left\| \int_{\R}G_{t-s}(\eta)\left( \int_{\R} G_s(x-z) \left(u_0(z-\eta)-u_0(z)\right) dz\right) d\eta \right\|^p\right]  \\
\le& \int_{\R}G_{t-s}(\eta)  \int_{\R} G_s(x-z) \E[\|u_0(z-\eta)-u_0(z)\|^p]dz d\eta  \\
\le& \int_{\R}G_{t-s}(\eta) |\eta|^{p\alpha} d\eta  \le C|t-s|^{\frac{{p\alpha}}2}.
\end{align*}
Triangle inequality, Assumption \ref{ass-u0v0}, and Jensen inequality then yield
\begin{align*}
{\color{black}I_1:=}&\E[\|\omega(t,x)-\omega(s,y)\|^p]   \\
\le& C\E\left[\left\|\int_{\R} \left(G_t (x-\eta)-G_s (x-\eta)\right) u_0(\eta)d\eta\right\|^p\right] \\
&+C\E\left[\left\|\int_{\R} \left(G_s (x-\eta)-G_s (y-\eta)\right) u_0(\eta)d\eta\right\|^p\right]  \\
\le& C|t-s|^{\frac{{p\alpha}}2}
+C\int_{\R}G_s (\eta) \E\left[ \|u_0(x{\color{black}-}\eta)-u_0(y{\color{black}-}\eta)\|^p\right] d\eta  \\
\le& C\left(|t-s|^{\frac{{p\alpha}}2}+|x-y|^{p\alpha}\right).
\end{align*}

For SWE, by Assumption \ref{ass-u0v0}, {\color{black}the uniform boundedness of $\|v_0(x)\|_{\mathbb L^p}$, and the elementary inequality $\|a_1+a_2+a_3\|^p\le 3^{p-1}\left(\|a_1\|^p+\|a_2\|^p+\|a_3\|^p\right)$ for $a_i\in\R^d, i=1,2,3$, we have}
\begin{align*}
&\E[\|\omega(t,x)-\omega(s,y)\|^p]    \\
\le& C\E\left[\left\|u_0(x+t)+u_0(x-t)-u_0(y+s)-u_0(y-s)\right\|^p\right]  \\
&+C\E\left[\left\|\int_{\R} \left(G_t (x-\eta)-G_s (x-\eta)\right) v_0(\eta)d\eta\right\|^p\right] \\
&+C\E\left[\left\|\int_{\R} \left(G_s (x-\eta)-G_s (y-\eta)\right) v_0(\eta)d\eta\right\|^p\right]  \\
\le& C\left(|t-s|^{p\alpha} +|x-y|^{p\alpha}\right).
\end{align*}
Therefore, we have proved 
\begin{align*}
\E[\|\omega(t,x)-\omega(s,y)\|^p]
\le C\left(\triangle((t,x);(s,y))\right)^{p\alpha}.
\end{align*}

Applying equality \eqref{bdg} and It\^o isometry \eqref{ito}, we obtain
\begin{align*}
{\color{black}I_2:}&=\E\left[\left\|\int_0^t\int_{\R} G_{t-\theta}(x-\eta) \sigma W(d\theta,d\eta)
-\int_0^s \int_{\R} G_{s-\theta}(y-\eta)\sigma W(d\theta,d\eta) \right\|^p\right] \\
&\le C\left(\int_s^t\int_{\R} |\F G_{t-\theta}(\xi)|^2 |\xi|^{1-2H}d\xi d\theta \right)^\frac p2  \\
&\quad +C\left(\int_0^s \int_{\R} 
|\F \left(G_{t-\theta}(x-\cdot)-G_{s-\theta}(x-\cdot)\right) (\xi) |^2 |\xi|^{1-2H} d\xi d\theta\right)^\frac p2 \\
&\quad +C\left(\int_0^s \int_{\R} 
|\F \left(G_{s-\theta}(x-\cdot)-G_{s-\theta}(y-\cdot)\right) (\xi)|^2
|\xi|^{1-2H}d\xi d\theta \right)^\frac p2.
\end{align*}
The fact that 
$\F g(x+\cdot)(\xi)=e^{i x\xi}\F g(\xi)$, $\xi$-a.e., and estimates \eqref{dif-cor-est}--\eqref{dif-cor-est-x} imply 
\begin{align*}
{\color{black}I_2}\le&C\left(\int_0^{t-s} \int_{\R} |\F G_\theta(\xi)|^2 |\xi|^{1-2H} d\xi d\theta \right)^\frac p2 \\
&\quad +C\left(\int_0^s \int_{\R} 
|\F G_{t-s+\theta}(\xi)-\F G_{\theta}(\xi)|^2
|\xi|^{1-2H} d\xi d\theta \right)^\frac p2  \\
&\quad +C\left(\int_0^s \int_{\R} 
\left( 1-\cos(\xi(x-y)) \right)|\F G_{\theta}(\xi)|^2
|\xi|^{1-2H} d\xi d\theta  \right)^\frac p2  \\
\le& C\left(\triangle((t,x);(s,y))\right)^{Hp}.
\end{align*}
By {\color{black} the elementary inequality $\|a_1+a_2\|^p\le 2^{p-1}\left(\|a_1\|^p+\|a_2\|^p\right)$, $a_i\in\R^d$, $i=1,2,$}  and change of variables, we have
\begin{align*}
{\color{black}I_3}:=&\E\left[\left\|\int_0^t \int_{\R} G_{t-\theta}(x-\eta) b(u(\theta,\eta)) d\eta d\theta
-\int_0^s \int_{\R} G_{s-\theta}(y-\eta) b(u(\theta,\eta)) d\eta d\theta \right\|^p\right]\\
\le& C\E\left[\left\|\int_0^{t-s} \int_{\R} G_{t-\theta}(x-\eta) b(u(\theta,\eta)) d\eta d\theta\right\|^p\right]\\
&+C\E \left[\left\|\int_{t-s}^{t} \int_{\R} G_{t-\theta}(x-\eta)  \left(b(u(\theta,\eta))-b\left( u(\theta-(t-s),\eta-(x-y)) \right)\right) d\eta d\theta \right\|^p\right].
\end{align*}
Since $b$ is Lipschitz continuous and $\E[\|u(t,x)\|^p]$ is uniformly bounded,
\begin{align*}
&\E \left[\left\|\int_0^{t-s} \int_{\R} G_{t-\theta}(x-\eta) b(u(\theta,\eta)) d\eta d\theta\right\|^p\right] 
\le C|t-s|^p.
\end{align*}
Applying H\"older's inequality and change of variables, we obtain
\begin{align*}
&\E \left[\left\|\int_{t-s}^{t} \int_{\R} G_{t-\theta}(x-\eta) 
\left(b(u(\theta,\eta))-b\left( u(\theta-(t-s),\eta-(x-y)) \right)\right) d\eta d\theta \right\|^p\right]\\
 \le& C\int_0^s 
\sup_{\eta} \E \left[ \left\|u(\theta+(t-s),\eta)-u(\theta,\eta-(x-y))\right\|^p\right] d\theta.
\end{align*}
The above estimates {\color{black}on $I_1$, $I_2$, and $I_3$, together with \eqref{mild},} yield
\begin{align*}
&\E [\|u(t,x)-u(s,y)\|^p] { \color{black}\le 3^{p-1}(I_1+I_2+I_3)}\\
\le &C\left(\triangle((t,x);(s,y))\right)^{p(\alpha\wedge H)} 
+C\int_0^s 
\sup_{\eta} \E \left[ \left|u(\theta+(t-s),\eta)-u(\theta,\eta-(x-y))\right|^p\right] d\theta.
\end{align*}
We conclude by Gronwall lemma that 
\begin{align*}
\E [\|u(t,x)-u(s,y)\|^p]
&\le C\left(\triangle((t,x);(s,y))\right)^{p(\alpha\wedge H)},
\end{align*}
which completes the proof of \eqref{mom-inc}.
\qed\\
As a result of  Kolmogorov continuity theorem and \eqref{mom-inc}, $u$ has a version which is in $\C_{\frac{\alpha\wedge H}2-, (\alpha\wedge H)-}$, a.s.
\begin{rk}\label{hol-com}
For Eq. \eqref{spde} with $d=1$ and vanishing drift driven by affine noise, i.e., $\sigma=\sigma_1 u+\sigma_2$ with $\sigma_1,\sigma_2\in \R$, the authors in \cite[Theorem 1]{BJQ16} have proved that there exist a constant $h_0\in (0,1)$ such that \eqref{mom-inc} holds for all
$|t-s|\le h_0$ and $|x-y|\le h_0$.
In our case, there is no restriction on $h_0$.
\end{rk}

\section{Optimality of H\"older Exponents}
\label{sec-opt}

In this section, we investigate the optimality of the estimate \eqref{mom-inc} for
\begin{align*}
u(t,x)=\int_0^t\int_{\R} G_{t-\theta}(x-\eta) W(d\theta,d\eta),\quad
(t,x)\in [0,T]\times \R,
\end{align*}
which is the solution of Eq. \eqref{spde} with $u_0=v_0=0$, $b=0$, and 
$\sigma=I_{d\times d}$.

Our main purpose in this section is to prove 
\begin{tm}\label{hol-opt}
Assume that $u_0=v_0=0$, $b=0$, and $\sigma=I_{d\times d}$.
\begin{enumerate}
\item[(1)]
Fix $t\in {\color{black}(}0,T]$ and a compact interval $J$.
There exists $C>0$ such that  
\begin{align}\label{hol-opt-x}
\|u(t,x)-u(t,y)\|_{\mathbb L^2} \ge C|x-y|^H, \quad x,y\in J.
\end{align}
Consequently, a.s., the mapping $x\mapsto u(t,x)$ is not in 
$\C_{H+}(\R)$.

\item[(2)]
Fix $x\in \R$ and $t_0\in (0,T]$.
There exists $C>0$ such that 
\begin{align}\label{hol-opt-t}
\|u(t,x)-u(s,x)\|_{\mathbb L^2}
\ge
\begin{cases}
C|t-s|^\frac H2,&\quad \text{for SHE};\\
C|t-s|^H,&\quad \text{for SWE}{\color{black},}
\end{cases}
\end{align}
for all $t,s\in [t_0,T]$ with $|t-s|$ sufficiently small.
Consequently, a.s., the mapping $t\mapsto u(t,x)$ is not in 
$\C_{\frac H2+}([0,T])$ for SHE or $\C_{H+}([0,T])$ for SWE.
\end{enumerate}
\end{tm}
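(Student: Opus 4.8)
The plan is to reduce every statement to an explicit variance computation via the It\^o isometry \eqref{ito}, exploiting that $u$ is a centered Gaussian field and that, since $\sigma=I_{d\times d}$ with i.i.d.\ components $W_1,\dots,W_d$, one has $\E[\|u(t,x)-u(s,y)\|^2]=d\,\E[|u_1(t,x)-u_1(s,y)|^2]$; it therefore suffices to prove the scalar ($d=1$) lower bounds and multiply by $\sqrt d$. For part (1) I would fix $t>0$ (for $t=0$ the field vanishes and the inequality is vacuous) and use \eqref{ito} together with the fact that $G_\theta$ is real and even, so that $|\F(G_\theta(x-\cdot))(\xi)-\F(G_\theta(y-\cdot))(\xi)|^2=2(1-\cos(\xi(x-y)))|\F G_\theta(\xi)|^2$. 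This gives
\begin{align*}
\E[|u_1(t,x)-u_1(t,y)|^2]=2c_H\int_0^t\!\!\int_\R(1-\cos(\xi(x-y)))\,|\F G_\theta(\xi)|^2|\xi|^{1-2H}\,d\xi\,d\theta,
\end{align*}
with $|\F G_\theta(\xi)|^2=e^{-2\theta\xi^2}$ for SHE and $=\sin^2(\theta\xi)/\xi^2$ for SWE. Carrying out the $\theta$-integral and then substituting $\xi=\zeta/|x-y|$ factors out exactly $|x-y|^{2H}$, and it remains to bound the resulting $\zeta$-integral below by a positive constant uniformly for $x,y\in J$. Since its integrand is nonnegative, continuous in $|x-y|$, and tends as $|x-y|\to0$ to a finite positive limit (the benchmark $\int_\R(1-\cos\zeta)|\zeta|^{-1-2H}\,d\zeta$ is finite because $1-2H>-1$ tames the singularity at $0$ and $1+2H>1$ tames the tail), its infimum over the compact range $|x-y|\le\operatorname{diam}J$ is positive. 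This yields \eqref{hol-opt-x}; the matching upper bound is \eqref{dif-cor-est-x}.

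For part (2), assuming $t_0\le s<t\le T$, I would split
\begin{align*}
u_1(t,x)-u_1(s,x)=\underbrace{\int_0^s\!\!\int_\R(G_{t-\theta}-G_{s-\theta})(x-\eta)\,W_1(d\theta,d\eta)}_{=:A}+\underbrace{\int_s^t\!\!\int_\R G_{t-\theta}(x-\eta)\,W_1(d\theta,d\eta)}_{=:B}.
\end{align*}
Because the temporal index is $1/2$ the noise has independent increments in time, so $B$ is independent of $\F_s$ and hence orthogonal to the $\F_s$-measurable $A$ (equivalently, the inner product \eqref{rkhs-w} is diagonal in time and the two integrands have disjoint temporal support). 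Thus $\E[|u_1(t,x)-u_1(s,x)|^2]=\E[|A|^2]+\E[|B|^2]$ and I may bound below by either summand. For SHE I would simply drop $A$ and compute, via \eqref{dif-cor-est} with $\alpha=1-2H$,
\begin{align*}
\E[|B|^2]=c_H\int_0^{t-s}\!\!\int_\R|\F G_\theta(\xi)|^2|\xi|^{1-2H}\,d\xi\,d\theta=C(t-s)^{H},
\end{align*}
which is exactly \eqref{hol-opt-t} in the SHE case.

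The main obstacle is the SWE temporal bound, because the naive estimate fails here: the ``new'' contribution satisfies $\E[|B|^2]\asymp(t-s)^{1+2H}$ (by scaling in \eqref{dif-cor-est}), which is too small to produce the exponent $H$. The increment is instead dominated by $\E[|A|^2]$, the accumulated effect of the difference $G_{t-\theta}-G_{s-\theta}$ over the whole past $[0,s]$. Writing $h=t-s$, $r=s-\theta$, using $\F G_\theta(\xi)=\sin(\theta\xi)/\xi$ and $\sin((r+h)\xi)-\sin(r\xi)=2\sin(h\xi/2)\cos((r+h/2)\xi)$, I would obtain
\begin{align*}
\E[|A|^2]=4c_H\int_\R|\xi|^{-1-2H}\sin^2(h\xi/2)\Bigl(\int_0^s\cos^2((r+\tfrac h2)\xi)\,dr\Bigr)d\xi.
\end{align*}
The inner $r$-integral is nonnegative and exceeds $s/4$ as soon as $|\xi|\ge2/s$; restricting the outer integral to that frequency range and substituting $\xi=\zeta/h$ factors out $h^{2H}$ and leaves $\int_{|\zeta|\ge2h/s}|\zeta|^{-1-2H}\sin^2(\zeta/2)\,d\zeta$, which for $h\le s/2$ is at least the positive constant $\int_{|\zeta|\ge1}|\zeta|^{-1-2H}\sin^2(\zeta/2)\,d\zeta$. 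Using $s\ge t_0>0$ then gives $\E[|A|^2]\ge C(t-s)^{2H}$ for $|t-s|$ small, which is \eqref{hol-opt-t} for SWE. This is precisely where the hypotheses $t_0>0$ and ``$|t-s|$ small'' enter, and where the asymmetry between SHE (dominant term $B$) and SWE (dominant term $A$) must be handled with care.

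Finally, to pass from these variance lower bounds to almost-sure non-membership in $\C_{H+}$, I would argue by contradiction. If $x\mapsto u(t,x)$ were $\alpha$-H\"older on a compact neighbourhood of some $x_0$ for some fixed $\alpha>H$ with positive probability, then by the Gaussian zero--one law this would hold with probability one, forcing $2^{nH}(u(t,x_0+2^{-n})-u(t,x_0))\to0$ almost surely, hence in probability. But these are centered Gaussian variables whose variances are bounded below by a positive constant (by part (1)), so $\PP(|2^{nH}(u(t,x_0+2^{-n})-u(t,x_0))|>\varepsilon)$ is bounded below uniformly in $n$ --- a contradiction. Thus a.s.\ $u(t,\cdot)\notin\C_\alpha(\R)$, and since $\C_{H+}(\R)\subseteq\C_\alpha(\R)$ we conclude a.s.\ $u(t,\cdot)\notin\C_{H+}(\R)$; the temporal statement follows identically from part (2).
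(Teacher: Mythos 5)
Your proposal is correct, and its variance computations share the paper's skeleton: both start from the It\^o isometry \eqref{ito}, split the temporal increment into the contribution of the new interval $[s,t]$ (your $B$, the paper's $I_1$) and the accumulated past contribution (your $A$, the paper's $I_2$), observe that the first dominates for SHE while the second dominates for SWE, and extract the exponent $2H$ by the substitution $\xi\mapsto\zeta/|x-y|$ (resp.\ $\zeta/h$) plus a compactness/smallness argument for the remaining integral. The genuine differences are three. First, for the SWE temporal bound the paper integrates $|\F G_{t-\theta}(\xi)-\F G_{s-\theta}(\xi)|^2$ in $\theta$ explicitly, obtaining several trigonometric terms that it bounds one by one to reach $C_1(t-s)^{2H}-C_2(t-s)^{2H+1}$; your identity $\sin((r+h)\xi)-\sin(r\xi)=2\sin(h\xi/2)\cos((r+\tfrac h2)\xi)$, combined with $\int_0^s\cos^2((r+\tfrac h2)\xi)\,dr\ge s/4$ for $|\xi|\ge 2/s$, is cleaner and reaches the same $(t-s)^{2H}$ bound under the same kind of smallness restriction ($t-s\le t_0/2$). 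Second, for SHE you keep only $B$, which suffices since $\E[|B|^2]=C(t-s)^H$ exactly by \eqref{dif-cor-est}; the paper's additional lower bound on $I_2$ is redundant for the stated inequality. Third, and most substantively, to pass from the variance lower bounds to almost-sure failure of H\"older continuity the paper applies the Fernique-type Lemma \ref{gau} (a.s.\ finiteness of the supremum of the normalized-increment Gaussian process implies a second-moment bound on that supremum, contradicting \eqref{hol-opt-x}), whereas you invoke the Gaussian zero--one law (the $\alpha$-H\"older paths form a measurable linear subspace) and contradict convergence in probability of $2^{nH}\bigl(u(t,x_0+2^{-n})-u(t,x_0)\bigr)$ with the uniform variance lower bound; both tools are legitimate, and yours has the minor advantage of arguing along a single sequence of Gaussian variables rather than a supremum --- indeed your argument can even dispense with the zero--one law, since positive probability $p$ of H\"older continuity forces $\limsup_n\PP(|Z_n|>\varepsilon)\le 1-p$ while the Gaussian lower bound makes $\PP(|Z_n|>\varepsilon)$ arbitrarily close to $1$ for small $\varepsilon$. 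Your explicit exclusion of $t=0$ in part (1), where the field vanishes and the stated inequality cannot hold, is a point the paper passes over silently by assuming $t=1$ without loss of generality.
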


\begin{proof}
Since $W_1,\ldots,W_d$ are independent identically distributed, we have
\begin{align*}
\|u(t,x)-u(s,y)\|_{\mathbb L^2}=\sqrt d\|u_1(t,x)-u_1(s,y)\|_{\mathbb L^2},\quad (t,x),(s,y)\in[0,T]\times\R.
\end{align*}

Without loss of generality, we assume that $t=1$.
For any $x\in \R$, set $R(x):=\E[u_1(1,0)u_1(1,x)]$.
It is clear that $\E[|u_1(1,x)-u_1(1,y)|^2]=2\left( R(0)-R(x-y) \right)$, and then to prove \eqref{hol-opt-x} it suffices to show that for any $x\in \R$, there exists $C>0$ such that 
\begin{align*}
R(0)-R(x)\ge C|x|^{2H}.
\end{align*}

By It\^o isometry \eqref{ito} and simple calculations, we have 
\begin{align*}
R(0)-R(x)
&=c_H\int_0^1 \int_{\R} \left(1-\frac{e^{ i \xi x}+e^{\mathbf -i \xi x}}2\right) |\F G_{t-s}(\xi)|^2
|\xi|^{1-2H} d\xi ds \\
&=\begin{cases}
c_H \int_{\R} \frac{1-\cos(\xi x)}{|\xi|^{2H+1}}
\big(1-e^{-|\xi|^2}\big)d\xi,
&\quad \text{for SHE};\\
\frac12 c_H \int_{\R} \frac{1-\cos(\xi x)}{|\xi|^{2H+1}}
\big(1-\frac{\sin(2|\xi|)}{2|\xi|} \big)d\xi,
&\quad \text{for SWE}.
\end{cases}
\end{align*}
The integrands above are both non-negative, and for $|\xi|>1$,
\begin{align*}
1-e^{-\frac{|\xi|^2}2}>1-e^{-\frac12},\quad 
1-\frac{\sin(2|\xi|)}{2|\xi|} \ge \frac12.
\end{align*}
Applying change of variables $y=|x| \xi$, we obtain
\begin{align*}
R(0)-R(x)
\ge C|x|^{2H} \int_{|y|\ge |x|} \frac{1-\cos y}{|y|^{2H+1}} dy.
\end{align*}
Since $x\in J$ with $J$ compact, the last integral is bounded below by a positive constant.
This proves \eqref{hol-opt-x}.

We now turn to proof of \eqref{hol-opt-t} for $t_0\leq s<t\leq T$.
In this situation, by It\^o isometry \eqref{ito},
\begin{align*}
&\quad\E\left[\left|u_1(t,x)-u_1(s,x) \right|^2\right]  \\
&=\E\left[\left|\int_s^t \int_{\R}G_{t-\theta}(x-\eta) W_1(d\theta,d\eta)\right|^2 \right] \\
&\quad +\E\left[\left|\int_0^s \int_{\R}\left( G_{t-\theta}(x-\eta)-G_{s-\theta}(x-\eta) \right) W_1(d\theta,d\eta)\right|^2 \right] \\
&=C_H \int_s^t \int_{\R} |\F G_{t-\theta}(\xi)|^2 |\xi|^{1-2H} d\xi d\theta\\
&\quad +C_H \int_0^s \int_{\R} |\F G_{t-\theta}(\xi)-\F G_{s-\theta}(\xi)|^2 |\xi|^{1-2H} 
 d\xi d\theta \\
&=:C_H I_1+C_H I_2.
\end{align*}
Applying \eqref{dif-cor-est}, we obtain an estimate of $I_1$:
\begin{align*}
I_1
=\begin{cases}
C_1 (t-s)^H, &\ \text{for SHE};\\
C_2 (t-s)^{2H+1}, &\ \text{for SWE}.
\end{cases}
\end{align*}
For $I_2$, we estimate separately for SHE and SWE.

For SHE, simple calculations yield
\begin{align*}
&\quad\int_0^s |\F G_{t-\theta}(\xi)-\F G_{s-\theta}(\xi)|^2  d\theta =(1-e^{-(t-s)|\xi|^2})^2
\frac{1-\exp\left(-2s|\xi|^2\right)}{2|\xi|^2}.
\end{align*}
Then by change of variables $y=\sqrt{t-s}\,\xi$, we obtain
\begin{align*}
I_2
&\ge \int_{|\xi|\ge \frac1{\sqrt{t-s}}}
(1-e^{-(t-s)|\xi|^2})^2
\frac{1-\exp\left(-s|\xi|^2\right)}{2|\xi|^{2H+1}} d\xi \\
&\ge \left(1-\exp\left(-\frac{t_0}{t-s} \right) \right) \int_{|\xi|\ge \frac1{\sqrt{t-s}}}
\frac{(1-e^{-(t-s)|\xi|^2})^2 }{2|\xi|^{2H+1}} d\xi \\
&=(t-s)^H \left(1-\exp\left(-\frac{t_0}{t-s} \right) \right) \int_{|y|\ge 1}  \frac{(1-e^{-|y|^2})^2}{2|y|^{2H+1}} dy \\
&\ge (t-s)^H \left(1-\exp\left(-\frac{t_0}{t-s} \right) \right) 
\frac{(1-e^{-1})^2}{2H} .
\end{align*}
For sufficient close $s$ and $t$, say $t-s\leq \frac{t_0}{\ln 2}$, one has $1-\exp(-\frac{t_0}{t-s})\ge \frac12$, and then
\begin{align*}
I_2
\ge \frac{(1-e^{-1})^2}{4H} (t-s)^H.
\end{align*}

For SWE, we have
\begin{align*}
&\quad\int_0^s |\F G_{t-\theta}(\xi)-\F G_{s-\theta}(\xi)|^2  d\theta\\
& =\frac{s(1-\cos((t-s)|\xi|)}{|\xi|^2}
+\frac{(1-\cos((t-s)|\xi|))\sin((t+s)|\xi|)}{2|\xi|^3} \\
&\quad +\frac{\sin(2(t-s)|\xi|)-2\sin((t-s)|\xi|)}{4|\xi|^3}.
\end{align*}
Then by using change of variables $y=(t-s)\xi$, we obtain
\begin{align*}
I_2
&\ge t_0 (t-s)^{2H} \int_{|y|\ge 1}
\frac{1-\cos(|y|)}{|y|^{2H+1}} dy
-\frac{(t-s)^{2H+1}}2 \int_{|y|\ge 1}\frac1{|y|^{2H+1}} dy \\
&\quad -\frac{3(t-s)^{2H+1}}4\int_{|y|\ge 1}
\frac1{|y|^{2+2H}} dy  \\
&=C_1(t-s)^{2H}-\frac{5H+1}{2H(2H+1)}(t-s)^{2H+1},
\end{align*}
where $C_1=t_0 \int_{|y|\ge 1}
\frac{1-\cos(|y|)}{|y|^{2H+1}} dy$ is positive and bounded.
Then when $t-s\leq \frac{H(2H+1)C_1}{5H+1}$, one has 
\begin{align*}
I_2 \ge \frac{C_1}2(t-s)^{2H}.
\end{align*}
This proves \eqref{hol-opt-t}.

Optimal H\"{o}lder exponent is crucial to verify the optimality of convergence rate of numerical schemes (see, e.g., \cite{CHL17(SINUM), CHL18(IMA)}). To concern the absence of H\"older continuity, we need the following Fernique-type theorem which says that an a.s. uniformly bounded centered, Gaussian process has bounded moments. 

\begin{lm}\label{gau}
For a centered Gaussian process $X=\{X(t),t\in\mathcal T\}$,
\begin{align}
\PP\left\{\sup_{t\in \mathcal T} X(t)<\infty \right\}=1
\Longleftrightarrow
\E\left[\exp\left(\alpha|\sup_{t\in \mathcal T} X(t)|^2\right) \right]<\infty
\end{align}
for sufficiently small $\alpha>0$.
\end{lm}

Proof:
See \cite[Theorem 3.2]{Adl90}.
\qed\\

Now we consider the absence of H\"older continuity of $u$ through Lemma \ref{gau}.
It is clear that $u(t,x)-u(s,x)$ and $u(t,x)-u(t,y)$ are both centered, Gaussian for any $0\le s<t\le T$ and $x,y\in \R$.
We only give details for the space variable, while the arguments are available for the time variable.

Suppose that for a fixed $t\in (0,T]$, the sample-paths $x\mapsto u(t,x)$ are $\gamma$-H\"older continuous for some $\gamma>H$.
Then for any compact interval $J$, there exists $C(\omega)\in (0,\infty)$ such that 
\begin{align*}
\sup_{x,y\in J,x\neq y} \frac{u(t,x)-u(t,y)}{|x-y|^\gamma}\le C(\omega).
\end{align*}
This yields that the centered Gaussian process
\begin{align*}
\left\{\frac{u(t,x)-u(t,y)}{|x-y|^\gamma},\quad x,y\in J,\ x\neq y\right\}
\end{align*}
is finite a.s., from which we conclude by Lemma \ref{gau} that 
\begin{align*}
\E\left[\sup_{x,y\in J,x\neq y} 
\left|\frac{u(t,x)-u(t,y)}{|x-y|^\gamma}\right|^2\right]<\infty.
\end{align*}
In particular, there would exist a finite $C>0$ such that 
\begin{align*}
\E\left[|u(t,x)-u(t,y)|^2\right]\le C|x-y|^{2\gamma},
\end{align*}
which contracts \eqref{hol-opt-x}.
\end{proof}

Now we can prove Theorem \ref{wel-hol}.

\begin{proof}[Proof of Theorem \ref{wel-hol}]
Theorem \ref{wel-hol} follows from the \textit{proof of Theorem \ref{wel-hol} (1)} in Section \ref{sec-wel}  and Theorem \ref{hol-opt}.
\end{proof}
{\color{black} In the literature, when considering the lower bound for H\"older continuity exponents of a random field determined by an SPDE, one usually restrict the study to the special linear case (see e.g., \cite[Theorem 5.1]{DS09}, \cite[Theorem 6.2]{HHN14}). This linear case is simpler, and is sufficient to determine the optimal H\"older continuity exponents. For the case of  $b\neq constant$, the exact expression of $\tilde u_1(t,x):=\int_0^t\int_{\R} G_{t-\theta}(x-\eta)b(u^n(\theta,\eta))d\eta d\theta$ is unknown, 
which brings difficulty to extend the result in Section \ref{sec-opt}
to the solution $u(t,x)$ in Theorem \ref{wel-hol}, even for the case of $u_0=v_0=0$. 
}

\section{Hitting Probability}\label{sec-hit}
For any Borel sets  $F \subset \R^{d}$,  we define  $\mathcal P(F)$  to be the set of all probability measures with compact support in  $F$.  For $\mu \in \mathcal P(\R^{d})$,  denote by $I_{\beta}(\mu)$ the  $\beta$-dimensional energy of  $\mu$;  that is,
$$I_{\beta}(\mu):=\iint \mathrm{K}_{\beta}(\|x-y\|) \mu(d x) \mu(d y),$$
where  $\|x\|$  denotes the Euclidean norm of  $x \in \R^{d}$.  Here and throughout,
\begin{equation*}
\mathrm{K}_{\beta}(r):=\left\{\begin{array}{ll}
r^{-\beta} & \text { if } \beta>0, \\
\log \left(\frac{e}{r\wedge1}\right) & \text { if } \beta=0, \\
1 & \text { if } \beta<0.
\end{array}\right.
\end{equation*}
For any  $\beta \in\R$ and a Borel set  $F \subset\R^{d}$, 
$\operatorname{Cap}_{\beta}(F)$  denotes the  $\beta$-dimensional Bessel--Riesz capacity of  $F$;  that is,
$$\operatorname{Cap}_{\beta}(F):=\left[\inf _{\mu \in \mathcal{P}(F)} I_{\beta}(\mu)\right]^{-1},$$ where $1/\infty:=0.$ Given  $\beta \ge 0$,  the  $\beta$-dimensional Hausdorff measure of  $F$  is defined by
$$\mathscr{H}_{\beta}(F)=\lim _{\epsilon \rightarrow 0^{+}} \inf \left\{\sum_{i=1}^{\infty}\left(2 r_{i}\right)^{\beta}: F \subset \bigcup_{i=1}^{\infty} B\left(x_{i}, r_{i}\right), \sup _{i \geq 1} r_{i} \leq \epsilon\right\},$$
where  $B(x, r)$  denotes the open Euclidean ball of radius  $r>0$  centered at  $x \in \R^{d}$.  When  $\beta<0$,  we define  $\mathscr{H}_{\beta}(F):=\infty$.

Based on the optimal H\"older continuity, we show the lower and upper bounds for hitting probabilities of Eq. \eqref{spde} in this section. For this purpose,
we begin with introducing the following criterion on the hitting probability of a general Gaussian random field (see \cite{BLX09}, Theorem 2.1). 
\begin{tm}\label{Gausshit}
Let $B=[a,b]:=\prod_{j=1}^N[a_j,b_j]$ $(a_j<b_j)$ be an interval or a rectangle in $\R^N$ and $X=\left\{X(t),\,t \in \mathbb{R}^{N}\right\}$  be an $\R^d$-valued Gaussian random field with
$X(t)=\left(X_1(t),\cdots,X_d(t)\right)$ with  independent and identically distributed coordinate processes  $X_{1}, \ldots, X_{d}$. Assume that  there exist positive and finite constants  $c_{1}$, $c_{2}$, $c_{3}$, $c_{4}$ such that 
\begin{itemize}
\item[(C1)] 
 $\mathbb{E}\left[|X_{1}(t)|^{2}\right] \ge c_{1},$ for all  $t \in B$,
and there exists $\tau=(\tau_1,\ldots,\tau_N)\in(0,1)^N$  such that for all $s,\, t \in B,$
$$c_{2} \sum_{j=1}^{N}\left|s_{j}-t_{j}\right|^{2 \tau_{j}}  \le \mathbb{E}\left[\left|X_{1}(s)-X_{1}(t)\right|^{2}\right]  \le c_{3} \sum_{j=1}^{N}\left|s_{j}-t_{j}\right|^{2 \tau_{j}}.$$

\item[(C2)] There exists a constant  $c_{4}>0$  such that for all  $s,\, t \in B,$ 
$$\operatorname{Var}\left(X_{1}(t) | X_{1}(s)\right) \ge c_{4} \sum_{j=1}^{N}\left|s_{j}-t_{j}\right|^{2 \tau_{j}}.$$
\end{itemize}
Here,  $\operatorname{Var}\left(X_{1}(t) | X_{1}(s)\right) $ denotes the conditional variance of  $X_{1}(t)$  given  $X_{1}(s)$. Then there exist positive constants $c_5$, $c_6$ such that for every Borel set $A$ in $\mathbb{R}^{d}$,
$$c_{5} \operatorname{Cap}_{d-Q}(A)  \le \mathbb{P}\left\{X(B)\cap A \neq \emptyset\right\}  \le c_{6} \mathscr{H}_{d-Q}(A),$$
where $Q:=   \sum_{j=1}^{N} 1 / \tau_{j}$.
\end{tm}

We proceed to apply Theorem \ref{Gausshit} to the random field $u=\{u(t,x),(t,x)\in[0,T]\times\R\}$ defined by \eqref{spde} with $b\equiv0$ and $\sigma=I_{d\times d}$. More precisely, we will show in Lemmas \ref{Condi1} and \ref{Condi2}
that the real-valued random field $V=\{V(t,x),(t,x)\in[0,T]\times\R\}$ defined by
\begin{equation}\label{Vtx}
V(t,x)=\int_0^t\int_{\R}G_{t-s}(x,y)W_1(ds,dy)
\end{equation}
satisfies the above conditions (C1) and (C2).
For our case, we define 
\begin{equation*}\label{tau}
\tau:=\begin{cases}
(H/2,H), &\quad \text{for SHE};\\
(H,H), &\quad \text{for SWE}.
\end{cases}
\end{equation*}
\begin{lm}\label{Condi1}
Let $V$ be defined by \eqref{Vtx}, $M>0$, and $t_0\in(0,T)$. Then there exists $C_0=C_0(t_0,H)$ such that for any $t\in[t_0,T]$ and $x,y\in[-M,M]$,
\begin{align}\label{vtx^2}
\E\left[|V(t,x)|^2\right]\ge C_0.
\end{align}
Moreover, 
there exist  $C_i=C_i(t_0,T,M,H),\,i=1,2$, such that for any $(t,x),(s,y)\in[t_0,T]\times[-M,M]$,
\begin{align}\label{Vsytx}
C_1\left(|s-t|^{2\tau_1}+|x-y|^{2\tau_2}\right) \le \mathbb{E}\left[\left|V(s,y)-V(t,x)\right|^{2}\right] \le C_2\left(|s-t|^{2\tau_1}+|x-y|^{2\tau_2}\right).
\end{align}

\end{lm}
Proof:
By Theorem \ref{ito-bdg} and Lemma \ref{bjq} (2), there exist positive constants $A_1=A_1(H)$ and $A_2=A_2(H)$ such that 
\begin{equation}\label{Vtxp}
\E\left[|V(t,x)|^2\right]=
\begin{cases}
A_1 t^{H}, &\quad \text{for SHE};\\
A_2 t^{1+2H}, &\quad \text{for SWE},
\end{cases}
\end{equation}
which proves \eqref{vtx^2}. The upper bound of \eqref{Vsytx} follows from Theorem \ref{wel-hol}. 
By Theorems \ref{wel-hol} and \ref{hol-opt}, there exist $c_1,\,c_2,\,c_3,\,c_4>0$ such that  for any $t\in[t_0,T]$,
 $$c_1|x-y|^{2\tau_2}\le \mathbb{E}\left[\left|V(t,y)-V(t,x)\right|^{2}\right]\le c_2|x-y|^{2\tau_2},\,\forall\,x,y\in[-M,M],$$
  and that
  for any $x\in[-M,M]$,
$$c_3|t-s|^{2\tau_1}\le \mathbb{E}\left[\left|V(t,x)-V(s,x)\right|^{2}\right]\le c_4|t-s|^{2\tau_1},\,\forall\,t,s\in[t_0,T].$$ 
Based on the above arguments, for the low bound of \eqref{Vsytx}, it suffices to follow the approach of the proof of Lemma 3.1 in \cite{NV09} or Proposition 4.1 in \cite{DS10}. We briefly show the proof.

If $|x-y|^{2\tau_2}\ge \frac{4c_4}{c_1}|t-s|^{2\tau_1}$, then we have
\begin{align*}
\E\left[|V(t, x)-V(s, y)|^{2}\right] &\ge \frac{1}{2} \E\left[|V(t, x)-V(t, y)|^{2}\right]-\E\left[|V(t, y)-V(s, y)|^{2}\right] \\
&\ge \frac{c_{1}}{2} |x-y|^{2\tau_2}-c_{4}|t-s|^{2\tau_1} \ge \frac{c_{1}}{4}|x-y|^{2\tau_2} \\
&\ge \frac{c_{1}}{4}\left(\frac{|x-y|^{2\tau_2}}{2}+ \frac{2c_4}{c_1}|t-s|^{2\tau_1}\right) \\
&\ge \min\left\{\frac{c_1}{8},\frac{c_4}{2}\right\}\left(|s-t|^{2\tau_1}+|x-y|^{2\tau_2}\right).
\end{align*}
Similarly, if $|t-s|^{2\tau_1}\ge \frac{4c_2}{c_3}|x-y|^{2\tau_2}$, we also have
\begin{align*}
\E\left[|V(t, x)-V(s, y)|^{2}\right] 
\ge  \min\left\{\frac{c_3}{8},\frac{c_2}{2}\right\}\left(|s-t|^{2\tau_1}+|x-y|^{2\tau_2}\right).
\end{align*}
For the case $\frac{c_1}{4c_4}|x-y|^{2\tau_2}\le|t-s|^{2\tau_1}\le \frac{4c_2}{c_3}|x-y|^{2\tau_2},$ it suffices to show that
\begin{equation}\label{A1+A2}
\E\left[|V(t, x)-V(s, y)|^{2}\right] 
\ge c_7|s-t|^{2\tau_1}.
\end{equation}
for some $c_7=c_7(H)>0$.
Indeed, the left hand of \eqref{A1+A2} is equal to
\begin{align*}
A_1(s,t;x,y)+A_2(s,t;x,y)
\end{align*}
with 
\begin{align*}
A_1(s,t;x,y)&:=\int_0^s\int_{\R}|\F \{G_{t-r}(x,\cdot)-G_{s-r}(y,\cdot)\}(\xi)|^2\mu(d\xi)dr,\\
A_2(s,t;x,y)&:=\int_s^t\int_{\R}|\F G_{t-r}(x,\cdot)(\xi)|^2\mu(d\xi)dr= \begin{cases}
C_1 |t-s|^{H}, &\quad \text{for SHE};\\
C_2 |t-s|^{1+2H}, &\quad \text{for SWE},
\end{cases}
\end{align*}
for some $C_1,C_2>0$, thanks to
Theorem \ref{ito-bdg} and Lemma \ref{bjq} (2). For SHE, the proof of \eqref{A1+A2} is finished since $2\tau_1=H$ and $A_1(s,t;x,y)$ is nonnegative. For SWE,  by taking $k-\beta=2H-1$ with $k=1$, i.e., taking $\beta=2-2H$
 in the Step 2 of the proof of Proposition 4.1 in \cite{DS10}, we obtain that
 $$A_2(s,t;x,y)\ge C_2 |t-s|^{2H}.$$
The proof is completed.
\qed
\begin{lm}\label{Condi2}
Let $V$ be defined by \eqref{Vtx}, $t_0\in(0,T)$. Then there exists $C=C(t_0,H)$ such that for any $s,t\in[t_0,T]$ and $x,y\in\R$,
\begin{align*}
\operatorname{Var}\left(V(t,x)|V(s,y)\right)\ge C\left(|s-t|^{2\tau_1}+|x-y|^{2\tau_2}\right).
\end{align*}
\end{lm}
Proof:
The following fact will be used (see e.g. \cite{BLX09}): if  $(U, V)$  is a centered Gaussian vector, then
$$\operatorname{Var}(U| V)=\frac{\left(\rho_{U, V}^{2}-(\sigma_{U}-\sigma_{V})^{2}\right)((\sigma_{U}+\sigma_{V})^{2}-\rho_{U, V}^{2})}{4 \sigma_{V}^{2}},$$
where  $\rho_{U, V}^{2}=\mathbb{E}\left[|U-V|^{2}\right], \sigma_{U}^{2}=\mathbb{E}\left[|U|^{2}\right]$  and  $\sigma_{V}^{2}=\mathbb{E}\left[|V|^{2}\right]$. 

For any $s,t\in(0,T]$ and $x,y\in\R$, denote $\gamma_{t,x;s,y}^2:=\E[|V(t,x)-V(s,y)|^2]$ and $\sigma_{t,x}^2:=\E[|V(t,x)|^2]$. By \eqref{Vtxp}, it suffices to show 
\begin{align}\label{gamtxsy}\nonumber
&(\gamma_{t,x;s,y}^2-(\sigma_{t,x}-\sigma_{s,y})^2)((\sigma_{t,x}+\sigma_{s,y})^2-\gamma_{t,x;s,y}^2)\\\ge &C\left(|s-t|^{2\tau_1}+|x-y|^{2\tau_2}\right).
\end{align}
By \eqref{vtx^2} and the right hand side of \eqref{Vsytx}, we have
$$(\sigma_{t,x}+\sigma_{s,y})^2-\gamma_{t,x;s,y}^2\ge 2C_0-C_2\left(|s-t|^{2\tau_1}+|x-y|^{2\tau_2}\right),$$
which implies that $(\sigma_{t,x}+\sigma_{s,y})^2-\gamma_{t,x;s,y}^2$ is bounded below by the positive constant $C_0$ when $|s-t|^{2\tau_1}+|x-y|^{2\tau_2}\le C_0/C_2$.
From \eqref{vtx^2}, it follows that
$$|\sigma_{t,x}-\sigma_{s,y}|=\frac{|\sigma_{t,x}^2-\sigma_{s,y}^2|}{\sigma_{t,x}+\sigma_{s,y}}\le \frac{|\sigma_{t,x}^2-\sigma_{s,y}^2|}{2\sqrt C_0},$$ 
where for SHE,
 $ |\sigma_{t,x}^2-\sigma_{s,y}^2|=A_1|t^H-s^H|\le A_1|t-s|^H,$
and for SWE,  $ |\sigma_{t,x}^2-\sigma_{s,y}^2|=A_2|t^{2H+1}-s^{2H+1}|\le A_2(2H+1)T^{2H}|t-s|,$
in view of \eqref{Vtxp}. This together with  the left hand of \eqref{Vsytx} indicates that for SHE,
\begin{align*}
\gamma_{t,x;s,y}^2-(\sigma_{t,x}-\sigma_{s,y})^2&\ge C_1\left(|s-t|^{H}+|x-y|^{2H}\right)-\frac{A_1^2}{4C_0}|t-s|^{2H}\\
&\ge \frac{C_1}{2}\left(|s-t|^{2\tau_1}+|x-y|^{2\tau_2}\right),
\end{align*}
and for SWE,
\begin{align*}
\gamma_{t,x;s,y}^2-(\sigma_{t,x}-\sigma_{s,y})^2&\ge C_1\left(|s-t|^{2H}+|x-y|^{2H}\right)-\frac{(A_2(2H+1)T^{2H})^2}{4C_0}|t-s|^{2}\\
&\ge \frac{C_1}{2}\left(|s-t|^{2\tau_1}+|x-y|^{2\tau_2}\right),
\end{align*}
provided 
\begin{align*}
|s-t|\le \min\left\{\left(\frac{2C_0C_1}{A_1^2}\right)^{\frac{1}{H}},\left(\frac{2C_0C_1}{(A_2(2H+1)T^{2H})^2}\right)^{\frac{1}{2-2H}}\right\}=:c_8.
\end{align*}
In conclusion, if  $|s-t|^{2\tau_1}+|x-y|^{2\tau_2}\le C_0/C_2$ and $|t-s|\le c_8$ hold, then \eqref{gamtxsy} is valid for some constant $C>0$. Using equation (4.42)
in \cite{DKN07},
\begin{align*}
(\gamma_{t,x;s,y}^2-(\sigma_{t,x}-\sigma_{s,y})^2)((\sigma_{t,x}+\sigma_{s,y})^2-\gamma_{t,x;s,y}^2)=4(\sigma_{t,x}^2\sigma_{s,y}^2-\sigma_{t,x;s,y}^2),
\end{align*}
where $\sigma_{t,x;s,y}:=\operatorname{Cov}(V(t,x),V(s,y))$.
In order to extend \eqref{gamtxsy} to all $(s,y)$ and $(t,x)$ in $[t_0,T]\times[-M,M]$, it suffices to show that for any $(t,x)\neq(s,y)$,
\begin{align}\label{sig>0}
\sigma_{t,x}^2\sigma_{s,y}^2-\sigma_{t,x;s,y}^2> 0,
\end{align}
because the continuity of the function $(t,x,s,y)\mapsto\sigma_{t,x}^2\sigma_{s,y}^2-\sigma_{t,x;s,y}^2$ indicates that $\sigma_{t,x}^2\sigma_{s,y}^2-\sigma_{t,x;s,y}^2>c$ for some $c>0$ and all $(s,y)$ and $(t,x)$ in $[t_0,T]\times[-M,M]$ satisfying $|s-t|^{2\tau_1}+|x-y|^{2\tau_2}\ge C_0/C_2$ or $|t-s|\ge c_8$.
Observe that for any $\lambda\in\R$ and $s< t$,
\begin{align*}
\E[|V(t,x)-\lambda V(s,y)|^2]=&\int_s^t\int_{\R}|\F G_{t-r}(x-\cdot)(\xi)|^2\mu(d\xi)dr\\
&+\int_0^s\int_{\R}|\F \{G_{t-r}(x,\cdot)-\lambda G_{s-r}(y,\cdot)\}(\xi)|^2\mu(d\xi)dr.
\end{align*}
Therefore, $\E[|V(t,x)-\lambda V(s,y)|^2]=0$
if and only if $t=s$ and $x=y$,
then the proof of \eqref{sig>0} is finished by a similar argument of the proof of (4.44) in \cite{DKN07}. 
\qed

 \begin{tm}\label{0Id}
 Assume that $b\equiv 0$ and $\sigma=I_{d\times d}$ in Eq. \eqref{spde}.
 Let $I$ and $J$ be non-trivial compact sets in $(0,T]$ and $\R$, respectively.  If  $A \subseteq \mathbb{R}^{d}$  is a Borel set,  then there exists $C=C(A,I,J,H)>0$ such that
$$C^{-1}\operatorname{Cap}_{d-Q}(A) \le \mathbb{P}\left\{u(I\times J)\cap A \neq\emptyset\right\} \le C \mathscr{H}_{d-Q}(A),$$
where $Q=3/H$ for SHE and $Q=2/H$ for SWE.
 \end{tm}
Proof: By the assumption upon $I$ and $J$, there exists two rectangle{\color{black}s} $B_1$ and $B_2$ in $(0,T]\times\R$
such that $B_1\subset I\times J\subset B_2$. Noticing that 
$$\mathbb{P}\left\{u(B_1)\cap A \neq \emptyset\right\}\le\mathbb{P}\left\{u(I\times J)\cap A \neq \emptyset\right\}\le \mathbb{P}\left\{u(B_2)\cap A \neq \emptyset\right\},$$
then the desired result follows from Theorem \ref{Gausshit} and Lemmas \ref{Condi1} and \ref{Condi2}.
 \qed
 
Based on Theorem \ref{0Id}, we are in a position to prove Theorem \ref{0sigma}.

{\color{black}
 \textit{Proof of Theorem \ref{0sigma}:} 
%
 For an invertible matrix $\tilde\sigma\in\R^{d\times d}$ and a Borel set $B\subset \R^d$, we denote $$B_{\tilde \sigma}=\{\tilde\sigma^{-1} a,a\in B\}.$$
We also introduce $U:=\sigma^{-1}u$. Then
$U(t,x)=\int_0^t\int_{\R} G_{t-r}(x-z) W(dr,dz)$.
By Theorem \ref{0Id} and $\mathbb{P}\left\{u(I\times J)\cap A \neq \emptyset\right\}=\mathbb{P}\left\{U(I\times J)\cap A_{\sigma^{-1}} \neq \emptyset\right\}$,
we obtain 
\begin{equation}\label{Utxh}
C^{-1}\operatorname{Cap}_{d-Q}(A_{\sigma^{-1}})\le\mathbb{P}\left\{u(I\times J)\cap A \neq \emptyset\right\}\le C \mathscr{H}_{d-Q}(A_{\sigma^{-1}}).
\end{equation}
Notice that the Hausdorff measure and capacity of the image of a set under Lipschitz mappings is comparable with those of the original set (see e.g.   \cite[Theorem 2.8]{EG15} \& \cite[Theorem 5.2.1]{AH96}), i.e., for a Lipschitz function $f:\R^d\rightarrow \R^d$,
\begin{gather*}
\mathscr{H}_{d-Q}(f(A))\le C_1\mathscr{H}_{d-Q}(A),\\
\operatorname{Cap}_{d-Q}(f(A))\le C_2\operatorname{Cap}_{d-Q}(A),
\end{gather*}
for some constants $C_1,$ $C_2$ depending on $f$.
It follows from the invertibility of $\sigma$ that the mappings $\R^d\ni x\mapsto \sigma x$ and $\R^d\ni x\mapsto \sigma^{-1} x$ are Lipschitz, thus
$\mathscr{H}_{d-Q}(A_{\sigma^{-1}})\le C_3\mathscr{H}_{d-Q}(A)$
and
$\operatorname{Cap}_{d-Q}(A)\le C_4\operatorname{Cap}_{d-Q}(A_{\sigma^{-1}})$,
which together with \eqref{Utxh} completes the proof of Theorem \ref{0sigma}.
 \qed

We would like to mention that it is possible to extend the result about hitting probabilities in Theorem \ref{0sigma} for Eq. \eqref{spde} to the case of $b\neq constant$ and $\sigma=0$ by eliminating the drift term via Girsanov’s theorem (see e.g. \cite{DKN07}). By means of
studying the density functions of the associated solutions (see e.g. \cite{DKN09,DS10}), it is also 
worthwhile to further consider the hitting probabilities for Eq. (1.1) with $\sigma$ depending on $u$.

\section*{Acknowledgment}

This work is supported by National Natural Science Foundation of China, No. 11971470, No. 11926417, and No. 12101296, and Southern University of Science and Technology fund, No. Y01286232.

}

\bibliographystyle{amsalpha}
\bibliography{bib}

\end{document}